\documentclass[11pt,letterpaper,reqno]{amsart}

\pdfoutput=1

\usepackage{fullpage}
\usepackage[foot]{amsaddr}
\usepackage{amsmath}
\usepackage{amsthm}
\usepackage{amssymb}
\usepackage{mathrsfs}
\usepackage{url}

\usepackage{todonotes}
\usepackage{mathtools}
\usepackage{textpos}
\usepackage[T1]{fontenc}

\usepackage[breaklinks,bookmarks=true,hypertexnames=false,pagebackref]{hyperref}
\hypersetup{colorlinks=true, citecolor=blue, linkcolor=red, urlcolor=blue}
\usepackage{cleveref}
\usepackage{bm}

\usepackage{thm-restate}

\makeatletter
\def\paragraph{\@startsection{paragraph}{4}%
  \z@\z@{-\fontdimen2\font}%
  {\normalfont\bfseries}}
\makeatother

\newtheorem{theorem}{Theorem}

\newtheorem{lemma}[theorem]{Lemma}

\newtheorem{proposition}[theorem]{Proposition}
\newtheorem{corollary}[theorem]{Corollary}
\theoremstyle{definition}

\theoremstyle{remark}

\AddToHook{env/lemma/begin}{\crefalias{theorem}{lemma}}
\AddToHook{env/claim/begin}{\crefalias{theorem}{claim}}
\AddToHook{env/observation/begin}{\crefalias{theorem}{observation}}
\AddToHook{env/proposition/begin}{\crefalias{theorem}{proposition}}
\AddToHook{env/corollary/begin}{\crefalias{theorem}{corollary}}
\AddToHook{env/condition/begin}{\crefalias{theorem}{condition}}
\AddToHook{env/definition/begin}{\crefalias{theorem}{definition}}
\AddToHook{env/question/begin}{\crefalias{theorem}{question}}
\AddToHook{env/example/begin}{\crefalias{theorem}{example}}
\AddToHook{env/conjecture/begin}{\crefalias{theorem}{conjecture}}
\AddToHook{env/remark/begin}{\crefalias{theorem}{remark}}

\crefname{theorem}{Theorem}{Theorems}
\crefname{observation}{Observation}{Observations}
\crefname{claim}{Claim}{Claims}
\crefname{condition}{Condition}{Conditions}
\crefname{algorithm}{Algorithm}{Algorithms}
\crefname{property}{Property}{Properties}
\crefname{example}{Example}{Examples}
\crefname{fact}{Fact}{Facts}
\crefname{lemma}{Lemma}{Lemmas}
\crefname{corollary}{Corollary}{Corollaries}
\crefname{definition}{Definition}{Definitions}
\crefname{remark}{Remark}{Remarks}
\crefname{proposition}{Proposition}{Propositions}
\crefname{section}{Section}{Sections}
\crefname{equation}{equation}{equations}

\newcommand{\numP}{\#{\textnormal{\textbf{P}}}}
\newcommand{\NP}{\textnormal{\textbf{NP}}}

\newcommand{\modPp}{\textnormal{\textbf{modP}}_p}

\newcommand{\defeq}{\coloneqq}

\DeclareMathOperator{\tw}{tw}

\title{homomorphic-core phase transition threshold in Erd\H{o}s--R\'{e}nyi random graphs}

\author{Jiaheng Wang}
\address[Jiaheng Wang]{University of Helsinki, Finland. \textnormal{Financially supported by} Helsinki Institute of Information Technology (HIIT).}

\begin{document}

\begin{abstract}
It is shown in this manuscript that a random graph $G$ drawn from the Erd\H{o}s--R\'{e}nyi model $\mathcal{G}(n,p)$ with
\[
p=p(n)\leq 1/2, \qquad \lim_{n\to+\infty}(np-\log n-\log\log n)=+\infty,
\]
is a homomorphic core, i.e., every homomorphism from $G$ to itself is an automorphism. 
This implies tight ETH-based lower bounds of the subgraph isomorphism problem for almost all $k$-vertex patterns with polynomial average degree. 
\end{abstract}

\maketitle

{
\small \noindent \textsc{Disclosure of AI tool usage.} The core\footnote{This is a pun. The author is aware that explaning a joke is the best way to ruin the joke, however.} idea of the proof was found by ChatGPT 5.6 Sol, 
while the manuscript is wrtten by the \emph{homo sapiens} author in a traditional manner. 
The author shall \emph{forgo} claiming intellectual property of the key technical contribution in this manuscript. 
The author, however, takes full responsibility of endorsing the correctness and proper citations. 

The total number of hours that the model worked on this task was approximately 7 hours, summing up the cost all interactions. 
The model ran at xHigh and Max levels. 
This information is disclosed on the basis of potential concern with fair and eco-friendly usage of AI tools. 

The disclosure is intentionally put before the main text of this manuscript for visibility. 
}

\medskip

\section{Introduction}

\subsection{Motivation: the complexity of homomorphisms}

Graph homomorphisms \cite{MR3012035,MR2089014} play a fundamental role in theoretical computer science because they provide a uniform language for expressing structure-preserving assignments. 
Many seemingly unrelated problems, including graph colouring, database-query evaluation, and constraint satisfaction problems, can be formulated as asking whether a graph homomorphism exists and how many such homomorphisms there are \cite{DBLP:conf/stoc/ChandraM77,DBLP:conf/stoc/Schaefer78,DBLP:journals/siamcomp/FederV98,DBLP:journals/jcss/KolaitisV00,DBLP:journals/talg/FockeGRZ25}. 
This motivates the study of the complexity of deciding whether graph homomorphisms exist and of counting them. 
Recall that a graph homomorphism from $G$ to $H$ is a map from $V(G)$ to $V(H)$ that preserves adjacency. We denote by $\#\hom(G\to H)$ the number of homomorphisms from $G$ to $H$.

We focus on homomorphism problems \emph{from the left}, in which the host graph $H$ is the input.
The task $\textsc{Hom}(G)$ asks whether such a homomorphism exists, and $\textsc{\#Hom}(G)$ asks how many exist. 
These problems are naturally \emph{parameterised} by $G$. 
By contrast, when $H$ is fixed and $G$ is the input, complexity dichotomies are typically achieveable: 
depending on whether the host graph $H$ satisfies certain criteria, the problems are either solvable in polynomial time or hard with respect to their complexity classes ($\NP$-hard, $\numP$-hard, $\modPp$-hard); see for example \cite{MR1047555,DBLP:journals/rsa/DyerG00,DBLP:journals/jacm/CaiC17,DBLP:journals/toct/GalanisGJ17,DBLP:journals/toct/GobelLS21,DBLP:conf/stoc/BulatovK22,DBLP:conf/focs/Bulatov17,DBLP:journals/jacm/Zhuk20}.

On $n$-vertex input graphs, $\textsc{\#Hom}(G)$ can always be solved in time $n^{\Theta(|V(G)|)}$ by brute-force enumeration. 
For certain graphs, this running time is optimal. 
If $G$ is a $k$-clique, then $\textsc{Hom}(G)$ (respectively, its counting variant) is equivalent to the \emph{subgraph isomorphism problem} $\textsc{Sub}(G)$, which asks whether the input graph contains a subgraph isomorphic to $G$ (not necessarily an induced one). 
The Exponential-Time Hypothesis (ETH) \cite{DBLP:journals/jcss/ImpagliazzoP01,DBLP:journals/jcss/ImpagliazzoPZ01} implies an $n^{\Omega(k)}$-time lower bound for $\textsc{Sub}(G)$ when $G$ is a $k$-clique \cite{DBLP:journals/iandc/ChenCFHJKX05,DBLP:journals/jcss/ChenHKX06}.

Now consider the case in which $G$ is the $k$-by-$k$ biclique $K_{k,k}$ (the complete bipartite graph). 
Although the counting problem $\textsc{\#Hom}(K_{k,k})$ is intractable under standard complexity assumptions \cite{DBLP:journals/siamcomp/FlumG04,DBLP:journals/tcs/DalmauJ04}, its decision counterpart behaves very differently. 
In fact, the decision problem $\textsc{Hom}(K_{k,k})$ is trivial. 
Indeed, if $H$ contains at least one edge $uv$, then there is a homomorphism from $K_{k,k}$ to $H$: map every vertex in its left part to $u$ and every vertex in its right part to $v$. Otherwise, $H$ has no edges, and no homomorphism from $K_{k,k}$ to $H$ exists.

This argument extends to broader graph families. 
Suppose that a pattern graph $G$ has a \emph{proper} subgraph $G'$ and that there exists a homomorphism $f$ from $G$ to $G'$. 
If the host graph $H$ contains a homomorphic copy of $G$, then restricting this copy to $G'$ yields a homomorphic copy of $G'$. 
Conversely, if $H$ contains a homomorphic copy of $G'$ through a mapping $g$, then the composition $g\circ f$ is a homomorphism from $G$ to $H$, giving a homomorphic copy of $G$. 
Therefore, $\textsc{Hom}(G)$ and $\textsc{Hom}(G')$ are essentially equivalent. 
In other words, whereas the complexity of $\textsc{\#Hom}(G)$ is governed by the \emph{treewidth} of $G$ \cite{DBLP:journals/siamcomp/FlumG04,DBLP:journals/tcs/DalmauJ04}, 
the complexity of $\textsc{Hom}(G)$ depends on the treewidth of the \emph{homomorphic core} of $G$ rather than on that of $G$ itself (see the references \cite{DBLP:journals/tcs/ChekuriR00,DBLP:conf/aaai/Freuder90,DBLP:conf/cp/DalmauKV02} for algorithms and the work by Grohe \cite{DBLP:journals/jacm/Grohe07} for the lower bound).

 Recall that a homomorphic core\footnote{We refrain from abbreviating this term as merely \emph{core}, because a core may also refer to a $k$-core in the sense of graph degeneracy. Graph degeneracy will be used later in the proof.} is a graph that admits no homomorphism to any proper subgraph. 
 The homomorphic core of a graph $G$ is the smallest subgraph of $G$ that is itself a core and to which $G$ admits a homomorphism. 
 Every graph has a unique homomorphic core up to isomorphism \cite{MR1192374}, providing an alternative characterisation of the useful, intensively studied notion of \emph{homomorphic equivalence}.

\subsection{Motivation: the complexity of subgraph isomorphism}

Thus far, we have seen how homomorphic cores help us understand the complexity of homomorphism problems. 
They also turn out to be useful in the context of subgraph isomorphism and counting. 
See \cite{DBLP:conf/stoc/CurticapeanDM17} for a framework that connects subgraph counts and homomorphism counts.  

The computational task $\textsc{Sub}(G)$ asks whether an input graph contains $G$ as a subgraph.
A well-studied variant is the \emph{colourful} problem $\textsc{ColSub}(G)$, defined as follows. 
Let $G$ have $k$ vertices labelled by $1,\dots,k$. 
The input consists of a graph $H$ together with a partition of its vertex set, $V(H)=\uplus_{i\in[k]}V_i$. 
The problem asks whether there exist vertices $v_1,\dots,v_k$ with $v_i\in V_i$ such that $v_iv_j\in E(H)$ for every $ij\in E(G)$. 
Intuitively, in the colourful setting, the vertices of the pattern graph $G$ receive distinct colours, the input graph $H$ is coloured using the same colours, and the embedding must preserve these colours. 
Equivalently, $\textsc{ColSub}(G)$ can be viewed as a $2$-CSP (a constraint satisfaction problem of arity $2$), with variables $x_i$ for $i\in[k]$, relations $R_e$ for $e\in E(G)$, and domains $\mathcal{D}(x_i)=V_i$.

The complexity of $\textsc{ColSub}(G)$ is much better understood than that of its uncoloured counterpart $\textsc{Sub}(G)$. 
For $\textsc{ColSub}(G)$, the treewidth of $G$ fully governs the complexity in a nearly fine-grained sense: if $\tw(G)=t$, dynamic programming over a tree decomposition solves $\textsc{ColSub}(G)$ in time $O(n^{\Theta(t)})$; conversely, assuming ETH, there exists a universal constant $C>0$ such that, for every fixed graph $G$ of treewidth $t$, $\textsc{ColSub}(G)$ admits no $O(n^{C\cdot(t/\log t)})$-time algorithm \cite{DBLP:journals/toc/Marx10,DBLP:conf/sosa/SMPS24,DBLP:journals/jcss/CurticapeanDNW26}. 

The above lower bound for $\textsc{ColSub}(G)$ can be transferred to the uncoloured problem $\textsc{Sub}(G)$ whenever $G$ is a homomorphic core.
When $G$ is not a homomorphic core, however, the picture becomes far less clear \cite{DBLP:conf/stacs/MarxP14}.  
This difficulty arises, in particular, for non-trivial bipartite graphs. 
Even for highly specific pattern families, determining the complexity has been a major challenge, let alone obtaining a complete complexity dichotomy for $\textsc{Sub}(G)$. 
For instance, the parameterised complexity of the $k$-biclique problem was resolved only relatively recently \cite{DBLP:journals/jacm/Lin18}, even though, as a standard textbook on parameterised complexity puts it, ``almost everyone considers that this problem should \emph{obviously} be $\mathbf{W}[1]$-hard'' \cite{MR3154461}. 
The same was true for grid graphs \cite{DBLP:conf/wg/ChenGL17}. 
It is worth noting that both papers received best-paper awards at their respective conferences.

Thus, $\textsc{Sub}(G)$ is a challenging problem to study for \emph{every} $G$. Nevertheless, we may study its complexity for a \emph{typical} pattern drawn from the Erd\H{o}s--R\'{e}nyi random graph model $\mathcal{G}(k,p)$.

For $\textsc{ColSub}(G)$, this setting is well understood when the edge probability lies slightly above the phase-transition threshold for connectivity; 
that is, when $p\geq(1+\varepsilon)\log k/k$ for any fixed $\varepsilon>0$. 
In this regime, let $\rho=\log(k)/\log(kp)$ denote the \emph{typical diameter}. 
Then, with high probability over a random graph $G\sim\mathcal{G}(k,p)$, one obtains an $\Omega(n^{C\cdot k/\rho})$ lower bound for $\textsc{ColSub}(G)$ \cite{DBLP:journals/jcss/CurticapeanDNW26}. 
To transfer this lower bound to the uncoloured setting, it would therefore suffice to show that a random pattern is a homomorphic core with high probability in the desired regime.

\subsection{Main results}

The first result in this direction was established by Hell and Ne{\v s}et{\v r}il \cite{MR2089014}, who showed that a random graph drawn from $\mathcal{G}(n,1/2)$ is a homomorphic core with high probability. 
For simplicity, we assume $p\leq 1/2$ throughout the remainder of the discussion. 
Bonato and Pra{\l}at \cite{MR2567955} extended this result to the regime $p>n^{-1/3}\log^2 n$. 
(See also the work of Parsa and Kayll \cite{MR4224391} on the directed version.)

On the other hand, observe that any $n$-vertex graph ($n\geq 2$) with at least one isolated vertex is not a homomorphic core: the isolated vertex can be mapped to any other vertex of the graph. 
Furthermore, any $n$-vertex graph ($n\geq 3$) with a degree-$1$ vertex $v$ is not a homomorphic core. 
Indeed, if the vertex adjacent to $v$ has degree at least $2$, then $v$ can be mapped to another neighbour of that vertex. 
Consequently, in any regime in which the typical \emph{minimum degree} is $0$ or $1$, a random graph is not a core with high probability.

The phase-transition threshold for minimum degree at least $2$ is $p=(\log n+\log\log n\pm\omega(1))/n$ \cite{MR130187}, so the phase-transition threshold for being a homomorphic core cannot be lower. 
It was conjectured that the phase-transition threshold for being a homomorphic core coincides with that for minimum degree at least $2$ \cite{MR2567955}. Our main result confirms this conjecture.

\begin{theorem}\label{thm:main}
Let $p=p(n)\leq 1/2$. 
If
\[
\lim_{n\to+\infty}(np-\log n-\log\log n)=+\infty, 
\]
then with high probability $G\sim\mathcal{G}(n,p)$ is a homomorphic core. 
Conversely, if
\[
\lim_{n\to+\infty}(np-\log n-\log\log n)=-\infty, 
\]
then with high probability $G\sim\mathcal{G}(n,p)$ is not a homomorphic core. 
\end{theorem}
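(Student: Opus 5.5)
The converse direction is the easy half. When $np-\log n-\log\log n\to-\infty$, the classical Erd\H{o}s--R\'{e}nyi result \cite{MR130187} says that with high probability $G$ has a vertex of degree at most $1$. Suppose $G$ has a degree-$1$ vertex $v$ with neighbour $u$, and $u$ has some other neighbour $w$. Mapping $v\mapsto w$ and fixing every other vertex is then a non-injective endomorphism. An isolated vertex is handled the same way. The remaining case is an isolated edge or a component that is a tree of size at most $2$. In that regime there are also, with high probability, other edges, and we can fold the component onto one of them. So $G$ is not a homomorphic core, and this direction needs no further work.

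For the main direction I plan to exclude non-injective endomorphisms directly. Since $G$ is finite, an endomorphism is an automorphism if and only if it is injective. Every non-injective endomorphism has a power that is a retraction onto a proper subgraph. So it suffices to show that with high probability $G$ has no \emph{retraction} $r$ onto a proper induced subgraph $R$ (the image of a retraction is an induced subgraph). Let $S=V\setminus R$ be the set of moved vertices. I will use the standard high-probability structural facts in this regime:
\begin{enumerate}
\item the minimum degree is at least $2$, and every $2$-core vertex has degree at least $2$;
\item the number of vertices of degree less than $\delta np$, for a small constant $\delta$, is $n^{o(1)}$, and any two such vertices are at distance greater than some large constant $L$;
\item every small vertex set $A$ (with $|A|\le \varepsilon/p$) spans at most $(1+o(1))|A|$ edges.
\end{enumerate}
A retraction maps each $x\in S$ to some $r(x)\in R$ with $N(x)\subseteq N_G[\,\cdot\,]$-type constraints; concretely $r(N(x))\subseteq N(r(x))$. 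If $x$ has all neighbours in $R$, this forces $N(x)\subseteq N(r(x))$, so $x$ is dominated by another vertex.

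The core counting step bounds the expected number of pairs $x\neq y$ with $N(x)\subseteq N(y)$, and more generally of "local foldings". A single domination $N(x)\subseteq N(y)$ has probability roughly $\sum_d \binom{n}{d}p^{2d}(1-p)^{n-d}\approx e^{-np}\sum_d (np^2)^d/d!$. The total over pairs is about $n^2 e^{-np}e^{np^2}$. This is small only for larger $p$, so dominations must be handled more carefully for small $p$. When $d(x)\ge 2$ and $p\ll n^{-1/2}$, two fixed neighbours of $x$ having a common neighbour $y\ne x$ costs $np^2$ each. Combined with the requirement that $x$ be low-degree (probability about $(np)^d e^{-np}/d!$), the expectation is $n\cdot (np)^2e^{-np}\cdot n p^2\cdot(\ldots)$. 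With $e^{-np}= o(1/(n\log n))$, this gives $o(1)$, and this is exactly where the $\log\log n$ term gets used.

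The main obstacle is the general case, where $S$ is large or the neighbours of moved vertices are themselves moved. For this I would split by $|S|$. If $S$ is small ($|S|\le \varepsilon/p$), I pick $x\in S$ extremal (for example, one whose image under $r$ is farthest). Then I use fact (3) together with the minimum-degree and expansion properties to show that some $x\in S$ has at least two neighbours in $R$ whose images are fixed, which reduces to the domination-type event above. If $S$ is large, $R$ is a proper subset whose complement has size at least $\varepsilon/p$. Here I would use a union bound over sets $R$ and maps, with the fact that a random graph on $R$ typically has no retraction from the much richer neighbourhoods of $S$. This requires edge-counting between $S$ and $R$: each $x\in S$ with $\Theta(np)$ neighbours must have its neighbourhood mapped into a single neighbourhood $N(r(x))$, which has probability $\exp(-\Omega(np\cdot|S|))$ per configuration and beats the $n^{O(|S|)}$ choices. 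I expect this large-$S$ union bound, and in particular choosing the right intermediate scale for $|S|$ when $p$ is near $\log n/n$, to be the hardest part.
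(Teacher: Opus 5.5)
Your converse direction and the reduction to proper retractions are fine, but the main direction has two genuine gaps.

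\textbf{Small $S$.} A moved vertex $x$ with two fixed neighbours $u,v$ only yields the $4$-cycle $x\,u\,r(x)\,v$, and that is not a rare event. The expected number of $4$-cycles is about $d^4/8\to\infty$, where $d=np$, so with high probability many vertices of typical degree lie on one. Your $o(1)$ estimate relied on the extra factor $\mathrm{e}^{-np}$ coming from $\deg(x)=2$. Nothing in your argument makes the vertex you find low-degree; the sparsity argument produces vertices of degree about $np$. (Incidentally, that estimate is $O(d^4\mathrm{e}^{-d})$ and needs only $np\ge\log n$; the $\log\log n$ term enters only through $\delta(G)\ge 2$.) Nor does any extremal choice guarantee a moved vertex all of whose neighbours are fixed: take two adjacent moved vertices sent to the ends of an edge. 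So there is no genuine domination $N(x)\subseteq N(r(x))$ to exploit.

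The paper handles this range ($|S|<64d^6$) by a global union bound over $S$, over $G[R]$, over a witness made of two edges at each moved vertex (available since $\delta(G)\ge 2$), and over the maps compatible with the witness, counted along spanning forests. The number of witness edges exceeds the number of components of the witness inside $S$ by at least $2|S|/3$, and this yields a saving of $n^{-2|S|/3}$.

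A repair closer to your plan exists near the threshold. Since $N(x)\cap R\subseteq N(x)\cap N(r(x))$, and with high probability no two vertices share three neighbours (expected number at most $d^6/n$), every moved vertex has at most two fixed neighbours. With your fact (2) controlling low-degree vertices, $G[S]$ would then be far too dense. Your degree-$2$ computation disposes of the moved vertices that are isolated in $G[S]$. For this you need a correct sparsity statement, and your fact (3) is false at scale $\varepsilon/p$. Take a set $B$ of about $\varepsilon/(2p)$ vertices together with $|B|$ vertices each having at least $k\approx\log d/\log\log d$ neighbours in $B$; this set has average degree at least $k\to\infty$. What holds is the paper's item (e): sets of size at most $8/p$ have average degree below $30\log d/\log\log d$, which is still $o(np)$.

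\textbf{Large $S$.} Your claim that a penalty $\exp(-\Omega(np\,|S|))$ beats $n^{O(|S|)}$ choices is exactly what fails near the threshold, because there $np=(1+o(1))\log n$ and the constants decide the comparison. Each moved vertex forces at most $n$ pairs to be absent, so the forced-absence penalty is at best $n^{-1-o(1)}$ per moved vertex. Meanwhile the image of each moved vertex alone can be chosen in $|R|$ ways. For instance, with $|S|=n/2$, summing $\Pr[r\text{ is a retraction}]$ over the maps with a fixed $S$ already gives at least $(n/2)^{n/2}(1-p)^{3n^2/8}=n^{(1/8+o(1))n}$. The paper's crude union bound in its Section 3 works only because $np/\log n\to\infty$ there.

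Near the threshold the paper does not enumerate maps for large $S$ at all. When $8/p\le|R|\le n-64d^6$, it combines the $4$-cycle observation with the typical bound of $4d^5$ vertices on $4$-cycles. This shows that at most $|S|+16d^6$ edges leave $S$. It then union-bounds only over vertex sets, with Chernoff bounds on the numbers of edges inside and leaving a set. When $|R|<8/p$, it compares $\chi(G)\ge d/(5\log d)$ with the $O(\log d/\log\log d)$-degeneracy of all sets of fewer than $8/p$ vertices; a retraction forces $\chi(G)=\chi(G[R])$, a contradiction. Your proposal has no substitute for these ingredients, and this is where the real difficulty of the near-critical regime lies.
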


By the discussion in the previous subsection, our result yields a lower bound for the subgraph isomorphism problem $\textsc{Sub}(G)$ when the pattern $G\sim\mathcal{G}(k,p)$ is random and $p\geq(1+\varepsilon)\log k/k$.

\begin{corollary}
Assuming ETH, there exists an absolute constant $c>0$ such that, for every constant $\varepsilon>0$ and every $p\geq (1+\varepsilon)\log k/k$, the following holds: 
with high probability over an Erd\H{o}s--R\'{e}nyi random graph $H\sim\mathcal{G}(k,p)$, 
the problem $\textnormal{\textsc{Sub}}(H)$ admits no $O(n^{c\cdot k/\rho})$-time algorithm on $n$-vertex input graphs. 
Here, $\rho=\log(k)/\log(kp)$ denotes the typical distance in $\mathcal{G}(k,p)$ \cite{MR1826308}.
\end{corollary}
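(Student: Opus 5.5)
The plan is to combine three ingredients: \cref{thm:main} (to know that the random pattern is a homomorphic core), the ETH lower bound for $\textsc{ColSub}$ on random patterns from \cite{DBLP:journals/jcss/CurticapeanDNW26}, and a parsimonious reduction from $\textsc{ColSub}(G)$ to $\textsc{Sub}(G)$ that is valid whenever $G$ is a homomorphic core.

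\emph{Step 1 (the pattern is a core).} For $p\geq(1+\varepsilon)\log k/k$ we have
\[
kp-\log k-\log\log k\geq \varepsilon\log k-\log\log k\to+\infty .
\]
So for $p\leq 1/2$, \cref{thm:main} applied with $n$ replaced by $k$ shows that $H\sim\mathcal{G}(k,p)$ is a homomorphic core with high probability. The range $p>1/2$ is not covered by \cref{thm:main} as stated, and I expect it to be the main obstacle. For $p\leq 1-\delta$ with $\delta>0$ constant, I would invoke the results of Hell and Ne\v{s}et\v{r}il \cite{MR2089014} and Bonato and Pra{\l}at \cite{MR2567955}, whose arguments work for constant densities bounded away from $1$. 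For $p\to 1$ I would check whether the first-moment argument behind \cref{thm:main} is monotone in the relevant events. Failing that, I would restrict the corollary to $p\leq 1/2$, which is the regime implicitly assumed throughout the paper.

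\emph{Step 2 (colourful hardness).} By \cite{DBLP:journals/jcss/CurticapeanDNW26}, under ETH there is an absolute $c>0$ such that, with high probability over $H\sim\mathcal{G}(k,p)$, $\textsc{ColSub}(H)$ has no $O(n^{c\cdot k/\rho})$-time algorithm. I would intersect this high-probability event with the one from Step 1; the intersection still has high probability by a union bound.

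\emph{Step 3 (reduction for cores).} Let $G$ be a homomorphic core on vertex set $[k]$, and let $(H',V_1\uplus\dots\uplus V_k)$ be a $\textsc{ColSub}(G)$ instance. Build the graph $X$ with vertex set $\{(v,i): v\in V_i\}$, in which $(v,i)$ and $(w,j)$ are adjacent iff $ij\in E(G)$ and $vw\in E(H')$. The graph $X$ has at most $n$ vertices and is constructed in linear time.
\begin{itemize}
\item A colourful solution $v_1,\dots,v_k$ gives the subgraph copy $i\mapsto(v_i,i)$ of $G$ in $X$.
\item Conversely, let $\varphi\colon G\to X$ be an injective homomorphism, and let $\pi$ be the projection $(v,i)\mapsto i$. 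Then $\pi$ is a homomorphism $X\to G$, so $\pi\circ\varphi$ is an endomorphism of $G$. Because $G$ is a homomorphic core, $\pi\circ\varphi$ is an automorphism $\sigma$.
\item Then $\varphi\circ\sigma^{-1}$ sends each $i$ into $V_i\times\{i\}$ and preserves edges, so its first coordinates form a colourful solution in $H'$.
\end{itemize}
Hence any $O(n^{c k/\rho})$-time algorithm for $\textsc{Sub}(G)$ would solve $\textsc{ColSub}(G)$ within the same bound. This contradicts Step 2 and proves the corollary with the same constant $c$.
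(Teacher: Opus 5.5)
Your proposal is correct and follows the route the paper indicates: \Cref{thm:main} with $k$ in place of $n$ (under the paper's standing assumption $p\le 1/2$), intersected with the high-probability \textsc{ColSub} lower bound of Curticapean et al., plus the standard transfer from \textsc{ColSub} to \textsc{Sub} for homomorphic cores, which the paper only alludes to and which you spell out correctly via the projection $\pi$ and the automorphism $\pi\circ\varphi$. On your hedge for $p>1/2$, drop the idea of pushing the core route to $p\to 1$: for instance, when $1-p=1/k$ the complement of $G$ w.h.p.\ has a vertex $v$ of complement-degree one, and its unique non-neighbour $u$ satisfies $N(u)\subseteq N(v)$, so $u\mapsto v$ is a proper retraction and $G$ is w.h.p.\ not a core, which means the restriction to $p\le 1/2$ is genuinely needed for this argument.
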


In particular, if $p=k^{-\alpha}$ is an inverse polynomial in $k$ with a fixed $\alpha>0$ and lies within the regime of the corollary, then the typical distance $\rho$ is bounded by a constant. 
In this case, the corollary implies that, under ETH and with high probability, no $O(n^{c'\cdot k})$-time algorithm exists, for some constant $c'>0$ depending only on $\alpha$ and $c$. 
In other words, almost every pattern with polynomial average degree is \emph{maximally} hard: unless ETH fails, no algorithm can fundamentally beat the $n^{O(k)}$-time brute-force algorithm.

\subsection{Proof overview}

An equivalent definition of homomorphic core is through the \emph{retraction}. 
A homomorphism $r$ from a graph $G$ to its induced subgraph $G[S_r]$ with image $S_r$ is a retraction if every element in $S_r$ is a fixed point, i.e., $r(V(G))=S_r$ and $r(s)=s$ for every $s\in S_r$. 
It is \emph{proper} if $S_r\neq V(G)$. 
It can be shown that a graph is a homomorphic core if and only if it has no proper retraction. 
We therefore prove the upper-threshold direction by showing that, with high probability, no map onto a proper induced subgraph can be a retraction. 

\medskip

\paragraph{The above-critical regime.} (\Cref{sec:above-critical}) 
We first consider the simpler regime in which the expected degree $np$ is much larger than $\log n$. 
For a fixed candidate retraction $r$, one can view the drawing procedure of a randomly graph $G$ in two stages: we reveal the graph induced by its image $S_r$ before revealing the remaining edges. 
Conditional on the revealed subgraph, all other edges remain independent. 
Any pair that the candidate maps to a non-edge must itself be absent, so the probability that the candidate is a retraction is controlled by the number of edges it forces to be absent.

We condition on typical bounds for the \emph{maximum degree} and \emph{clique number} of the random graph. 
If the image of the candidate is large, the maximum-degree bound implies that every vertex outside the image forces many edges across the boundary to be absent. 
This probability penalty dominates the number of possible maps. 
If the image is small, the clique-number bound, together with the \emph{Motzkin--Straus inequality}, shows that many pairs outside the image must be absent. 
The resulting penalty is strong enough to take a union bound over all candidate maps, 
even if we just apply a super trivial upper bound on the number of candidate maps with a fixed image. 

\medskip

\paragraph{The near-critical regime.} (\Cref{sec:near-critical}) 
Near the threshold, the preceding union bound becomes too crude, especially for retractions whose images contain almost every vertex. 
We therefore condition on a more detailed description of the typical random graph. 
These typical properties first imply that the boundary of a retraction image must be small. 
If a vertex outside the image has two neighbours inside it, then those vertices, together with its image under the retraction, form a $4$-cycle. 
Since few vertices lie on such cycles, only a small number of outside vertices can have several neighbours in the image.

The main difficulty is to rule out retractions with very large images; the other cases can be ruled out due to the typical properties.  
Unlike the proof strategy for the above-critical regime, a trivial bound on the number of candidate maps does not suffice. 
The property that the minimum degree is at least $2$ with high probability, however, allows us to design \emph{witnesses} that narrows down the count. 
Suppose we have a fixed image $S$. 
For every vertex outside the image $S$, once the random graph is fully revealed, choose two incident edges in a canonical way, whose existence is guaranteed by the minimum-degree condition. 
The resulting sparse witness substantially restricts the candidate map: after the graph on $S$ has been revealed, the map must send the part of the witness outside $S$ homomorphically into the revealed graph. 
A spanning-forest argument then gives a much sharper count of the possible maps than the trivial bound. 
Combining this count with the probabilities that the witness edges are present and that all edges forbidden by the retraction are absent yields a desired bound.

\section{Preliminaries}

\subsection{Basic definitions}

$[n]\defeq\{1,2,\dots,n\}$, and $\log$ is the natural logarithm. 

Unless stated otherwise, every graph in this manuscript is finite, simple, loopless and undirected. 
Additionally, every $n$-vertex graph is vertex-labelled with $[n]$. 
Denote by $N_G(v)$ the set of neighbours of $v$ in $G$. 
The \emph{degree} of a vertex $v$ in a graph $G$ is $\deg_G(v)=|N_G(v)|$, the number of neighbours of $v$ in $G$. 
The subscripts for $N_G(v)$ and $\deg_G(v)$ shall be dropped when it is clear from the context. 
The (maximum) \emph{degree} $\Delta(G)$ of a graph $G$ is the maximum degree of any vertex in $G$. 
The \emph{minimum degree} $\delta(G)$ of a graph $G$ is the minimum degree of any vertex in $G$. 
Specially, $\delta(G)=0$ if $G$ contains any isolated vertex. 
The \emph{clique number} $\omega(G)$ of a graph $G$ is the number of vertices of its largest clique.\footnote{Be aware of the notation overload of $\omega$. However, it is clear from the context whether it means the clique number or the asymptotic notation.}

Suppose a random variable $X\sim\mathrm{Bin}(n,p)$ follows the binomial distribution with expectation $\mu=np$. 
The (absolute form of) Chernoff bound states
\begin{equation} \label{eq:chernoff-bound}
\begin{gathered}
\Pr[X\geq t]\leq\left(\frac{\mathrm{e}\mu}{t}\right)^t \qquad \text{for any $t\geq \mu$};\\
\Pr[X\leq t]\leq \exp\{-\mu\}\left(\frac{\mathrm{e}\mu}{t}\right)^t \qquad \text{for any $t\leq \mu$}. 
\end{gathered}
\end{equation}

\subsection{Graph morphisms}

Here we collect basic definitions of graph morphisms. 

An \textit{isomorphism} is a bijection map $g\colon V(G)\to V(H)$ such that
\[
xy\in E(G) \iff g(x)g(y)\in E(H).
\]
An \textit{automorphism} of a graph $G$ is an isomorphism from $G$ to itself. 

A \textit{homomorphism} from $G$ to $H$ is a map $f\colon V(G)\to V(H)$ such that
\[
xy\in E(G) \implies f(x)f(y)\in E(H).
\]
An \textit{endomorphism} of a graph $G$ is a homomorphism from $G$ to itself. 

A \textit{retraction} $r$ from a graph $G$ to its induced subgraph $G[S]$ is an endomorphism such that
\[
r(V(G))=S \qquad \text{and} \qquad r(s)=s\text{ for every }s\in S. 
\]
It is \textit{proper} if $S\neq V(G)$. 

A standard fact is the following; see {\cite[Proposition 1.31]{MR2089014}}. 
\begin{proposition}[homomorphic core] \label{prop:hom-core}
The two statements are equivalent for any graph $G$. 
\begin{itemize}
\item[(1)] Every endomorphism of $G$ is an automorphism. 
\item[(2)] $G$ has no proper retraction. 
\end{itemize}
A graph $G$ is called a \emph{homomorphic core} if it satisfies (any of) the above two statements.
\end{proposition}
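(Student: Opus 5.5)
We prove (1)$\Rightarrow$(2) and (2)$\Rightarrow$(1) separately; everything uses only finiteness of $G$.

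\emph{(1)$\Rightarrow$(2).} Suppose every endomorphism of $G$ is an automorphism, and let $r$ be a retraction of $G$ onto $G[S]$. Then $r$ is an endomorphism, hence an automorphism, hence a bijection of $V(G)$. Its image is $r(V(G))=S$, so $S=V(G)$. Thus $r$ is not proper, and $G$ has no proper retraction.

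\emph{(2)$\Rightarrow$(1).} We prove the contrapositive. Assume $G$ has an endomorphism $f$ that is not an automorphism; we build a proper retraction. There are two cases.

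\textbf{Case A: $f$ is not a bijection.} Since $V(G)$ is finite, $f$ is not surjective. Consider the chain of images $V(G)\supseteq f(V(G))\supseteq f^2(V(G))\supseteq\cdots$. It stabilises at some set $T=f^m(V(G))$, and $T\neq V(G)$ because $f(V(G))\subsetneq V(G)$. The map $f$ sends $T$ onto $f(T)=T$, so $f|_T$ is a permutation of the finite set $T$. Let $N\geq m$ be a multiple of the order of $f|_T$. Put $r=f^N$. Then:
\begin{itemize}
\item $r$ is an endomorphism, since compositions of homomorphisms are homomorphisms;
\item $r(V(G))\subseteq f^m(V(G))=T$, and $r(V(G))\supseteq r(T)=T$, so $r(V(G))=T$;
\item $r$ fixes every element of $T$, by the choice of $N$.
\end{itemize}
So $r$ is a homomorphism from $G$ onto $T$. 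Every edge of $G[T]$ is the image of itself under $r$, so $r$ maps $G$ into the induced subgraph $G[T]$. Hence $r$ is a proper retraction.

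\textbf{Case B: $f$ is a bijective homomorphism but not an isomorphism.} Then $f$ maps edges to edges injectively. Since $E(G)$ is finite, $f$ induces a bijection on $E(G)$. Now let $x,y$ be vertices with $f(x)f(y)\in E(G)$. That edge is the image of some edge $uv$. Injectivity of $f$ on vertices then gives $\{u,v\}=\{x,y\}$, so $xy\in E(G)$. Therefore $f$ is an isomorphism, which contradicts the case assumption. So Case B cannot occur.

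This completes the proof. The main (and only mildly delicate) step is choosing the power $r=f^N$ so that it is idempotent onto the stabilised image $T$. The edge-counting argument in Case B is the second point needing care.
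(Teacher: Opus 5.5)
Your proof is correct, but for (2)$\Rightarrow$(1) you take a different route from the paper. The paper chooses an induced subgraph $H$ with the fewest vertices to which $G$ admits a homomorphism $f$. It argues by minimality that the restriction $g=f|_{V(H)}$ is an automorphism of $H$, and then uses $g^{-1}\circ f$ as the retraction. You instead work directly with the given non-surjective endomorphism $f$: you pass to its stable image $T=f^m(V(G))$, on which $f$ acts as a permutation, and take an idempotent power $r=f^N$.

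Your construction is explicit and needs no minimisation; it retracts onto the eventual image of whichever endomorphism you start with. The paper's argument lands on a minimal image, which is in fact the homomorphic core, so it gives more structural information.

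Your Case~B also makes explicit a fact the paper uses silently twice: a bijective endomorphism of a finite graph is an automorphism. The paper needs it to pass from ``some endomorphism is not an automorphism'' to ``some endomorphism has a proper image''. It needs it again to conclude that $g$ is an automorphism, since minimality only rules out non-surjective endomorphisms of $H$.

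One small wording point: the sentence ``every edge of $G[T]$ is the image of itself under $r$'' is not what shows $r$ is a homomorphism into $G[T]$. That follows because $r$ is an endomorphism with $r(V(G))=T$. Hence every edge $xy$ is sent to an edge with both ends in $T$, i.e.\ an edge of $G[T]$.
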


\begin{proof}
The direction (1)$\Rightarrow$(2) is trivial: any retraction by definition is an endomorphism. 

To show (2)$\Rightarrow$(1), assume $G$ has an endomorphism to a proper induced subgraph. 
Let $H$ be the induced subgraph with the minimum number of vertices that $G$ is homomorphic to. 
Thus by the choice of $H$, any endomorphism of $H$ must be an automorphism. 
Now let $f\colon V(G)\to V(H)$ be a homomorphism from $G$ to $H$, and $g\colon V(H)\to V(H)$ be the restriction of $f$ on $H$, i.e., $f(s)=g(s)$ for any $s\in V(H)$. 
Trivially, $g$ is an endomorphism, and hence an automorphism, and thus it has an inverse automorphism $g^{-1}$. 
Then $g^{-1}\circ f$ is a retraction of $G$ to $H$. 
\end{proof}

\subsection{Random graphs}

The Erd\H{o}s--R\'{e}nyi random graph model $\mathcal{G}(n,p)$ \cite{MR125031} is a distribution over $n$-vertex graphs with each pair of distinct vertices joined by an edge with probability $p$ independently. 
Erd\H{o}s and R\'{e}nyi \cite{MR130187} located the phase transition threshold for minimum degree $2$: 

\begin{theorem}[minimum degree phase transition]\label{thm:min-deg-phase-transition}
Let $p=p(n)\leq 1/2$. 
If
\[
\lim_{n\to+\infty}(np-\log n-\log\log n)=+\infty, 
\]
then w.h.p. $\delta(G)\geq 2$. 
Conversely, if
\[
\lim_{n\to+\infty}(np-\log n-\log\log n)=-\infty, 
\]
then w.h.p. $\delta(G)\leq 1$. 
\end{theorem}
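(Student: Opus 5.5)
This is the classical Erd\H{o}s--R\'{e}nyi result, and the paper quotes it from \cite{MR130187}, so a full proof is not really needed; still, here is how I would prove it, by the first and second moment methods applied to low-degree vertices. Write $np=\log n+\log\log n+c_n$. Let $X_0$ and $X_1$ be the numbers of vertices of degree $0$ and $1$, and set $X=X_0+X_1$. Then $\delta(G)\geq 2$ if and only if $X=0$.

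\textbf{Upper direction ($c_n\to+\infty$).}
\begin{itemize}
\item Compute $\mathbb{E}X_1=n(n-1)p(1-p)^{n-2}$ and $\mathbb{E}X_0=n(1-p)^{n-1}$.
\item Use $1-p\le \mathrm{e}^{-p}$ to get $\mathbb{E}X_1\le n\cdot np\cdot \mathrm{e}^{-np}\,\mathrm{e}^{2p}$.
\item We may assume $np\le 2\log n$, by monotonicity of the property $\delta\ge2$ (or by handling larger $p$ separately with the same bound). Then $\mathbb{E}X_1\le O(1)\cdot n\cdot(\log n+\log\log n+c_n)\cdot \mathrm{e}^{-c_n}/(n\log n)$.
\item This is $O(\mathrm{e}^{-c_n}(1+c_n/\log n))\to 0$. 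When $c_n$ is large, the factor $np\,\mathrm{e}^{-np}$ is decreasing in $np$, so the bound only improves.
\item $\mathbb{E}X_0$ is smaller by a factor of order $np$, so it also tends to $0$.
\item Markov's inequality then gives $\Pr[X>0]\to0$.
\end{itemize}

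\textbf{Lower direction ($c_n\to-\infty$).} It suffices to show $X_1>0$ w.h.p., or $X_0>0$ when $p$ is so small that isolated vertices dominate.
\begin{itemize}
\item First reduce to the range $np\ge \tfrac12\log n$, say. Below it, $\mathbb{E}X_0\ge n\mathrm{e}^{-np-O(np^2)}\to\infty$, and the standard second-moment computation for isolated vertices gives $X_0>0$ w.h.p.
\item In the main range, $p=O(\log n/n)$, so $p^2n=o(1)$. The same estimate as above, now using $1-p\ge \mathrm{e}^{-p-p^2}$, gives $\mathbb{E}X_1=(1+o(1))\,\mathrm{e}^{-c_n}\to\infty$.
\item Apply Chebyshev, or more cleanly the Paley--Zygmund form $\Pr[X_1>0]\ge(\mathbb{E}X_1)^2/\mathbb{E}X_1^2$. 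This requires $\mathbb{E}X_1^2\le(1+o(1))(\mathbb{E}X_1)^2+\mathbb{E}X_1$.
\end{itemize}

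\textbf{Main obstacle: the second moment.} Expand $\mathbb{E}[X_1(X_1-1)]$ as a sum over ordered pairs $u\ne v$, and split by whether $uv$ is an edge.
\begin{itemize}
\item If $uv\in E$, both vertices have degree $1$ exactly when $uv$ forms an isolated edge. This has probability $p(1-p)^{2n-4}$, and these terms contribute $O(n^2p\,\mathrm{e}^{-2np})=o(1)$.
\item If $uv\notin E$, then $u$ and $v$ each choose one neighbour among the remaining $n-2$ vertices. The probability is $(1-p)\big((n-2)p(1-p)^{n-3}\big)^2$ adjusted for a shared neighbour, which is $(1+O(p))\cdot$ the product of the marginals. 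The shared-neighbour case only lowers the count.
\end{itemize}
So $\mathbb{E}[X_1(X_1-1)]\le(1+o(1))(\mathbb{E}X_1)^2+o(1)$. Hence $\mathrm{Var}X_1=o((\mathbb{E}X_1)^2)$ and $X_1>0$ w.h.p. The only care needed is tracking the $(1-p)^{-O(1)}$ correction factors, which are $1+o(1)$ in this range because $p\to0$.
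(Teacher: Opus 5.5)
Your proposal is correct and follows essentially the same route as the paper: a first-moment bound with Markov's inequality for the upper direction, and for the converse a second-moment bound obtained by conditioning on whether $uv$ is an edge. The differences are cosmetic: you work with $X_1$ instead of the number of vertices of degree at most $1$, and you treat $np<\frac12\log n$ via isolated vertices rather than by monotonicity. There are two harmless slips. In the main range, $\mathbb{E}X_1=(1+o(1))\frac{np}{\log n}\mathrm{e}^{-c_n}$, so it is only $\Theta(\mathrm{e}^{-c_n})$ rather than $(1+o(1))\mathrm{e}^{-c_n}$. The isolated-edge term $n(n-1)p(1-p)^{2n-4}$ is not $o(1)$ near $np=\frac12\log n$; however, it equals $(\mathbb{E}X_1)^2/(n(n-1)p)=o((\mathbb{E}X_1)^2)$, which is all the variance bound needs.
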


\begin{proof}
Set $d=np$, and let $X$ be the number of vertices of degree at most $1$. 
For every vertex $v$,
\[
\Pr[\deg(v)\leq 1]=(1-p)^{n-1}+(n-1)p(1-p)^{n-2}\leq (1+d)\exp\{-d+1\}.
\]
By linearity of expectation, $\mathbb{E}[X]\leq n(1+d)\exp\{-d+1\}$. 
Suppose first that
\[
w_n\defeq d-\log n-\log\log n\longrightarrow+\infty.
\]
If $d\leq 2\log n$, then
\[
\mathbb{E}[X]\leq n(1+d)\exp\{-d+1\}=\exp\{1-w_n\}\frac{1+d}{\log n}.
\]
If $d>2\log n$, then, since $(1+x)\exp\{-x\}$ is decreasing for $x>0$,
\[
\mathbb{E}[X]\leq n(1+2\log n)\exp\{-2\log n+1\}=O\left(\frac{\log n}{n}\right).
\]
Since both right-hand-sides of the above two inequalities are $o(1)$, we have $\mathbb{E}[X]=o(1)$. 
Thus, by Markov's inequality, $X=0$ with high probability, and hence $\delta(G)\geq 2$.

Conversely, we apply a second-moment analysis. 
Suppose that
\[
w_n\defeq \log n+\log\log n-d\longrightarrow+\infty.
\]
We may assume $d\geq(\log n)/2$; this stochastically dominates the case when $d$ is smaller than this. 
Because $d=O(\log n)$, we have $p=d/n=o(1)$ and $np^2=d^2/n=o(1)$. 
Therefore,
\[
\mathbb{E}[X]=n(1-p)^{n-2}\bigl((1-p)+(n-1)p\bigr)=(1+o(1))n(1+d)\exp\{-d\}=(1+o(1))\frac{1+d}{\log n}\exp\{w_n\}\to +\infty. 
\]

For the second-moment bound, fix two distinct vertices $u$ and $v$. 
Conditioning on whether $uv$ is an edge gives
\begin{align*}
\Pr[\deg(u)\leq 1,\deg(v)\leq 1] 
&=(1-p)
\left((1-p)^{n-2}+(n-2)p(1-p)^{n-3}\right)^2
+p(1-p)^{2n-4}\\
&\leq 
\left((1-p)^{n-2}+(n-2)p(1-p)^{n-3}\right)^2.
\end{align*}
Moreover,
\begin{align*}
\Pr[\deg(v)\leq 1]
={}&(1-p)\left((1-p)^{n-2}
 +(n-2)p(1-p)^{n-3}\right)
 +p(1-p)^{n-2} \\
\geq{}&(1-p)\left((1-p)^{n-2}
 +(n-2)p(1-p)^{n-3}\right).
\end{align*}
Consequently,
\[
\Pr[\deg(u)\leq 1,\deg(v)\leq 1]
\leq
\frac{\Pr[\deg(v)\leq 1]^2}{(1-p)^2}
=(1+o(1))\Pr[\deg(v)\leq 1]^2,
\]
where the last equality follows from $p=o(1)$.

It follows that
\begin{align*}
\mathbb{E}[X^2]
&=\mathbb{E}[X]
 +\sum_{u\neq v}
 \Pr[\deg(u)\leq 1,\deg(v)\leq 1] \\
&\leq \mathbb{E}[X]
 +(1+o(1))n(n-1)
 \left(\frac{\mathbb{E}[X]}{n}\right)^2 \\
&\leq \mathbb{E}[X]+(1+o(1))\mathbb{E}[X]^2.
\end{align*}
Since $\mathbb{E}[X]\to\infty$, the second-moment inequality yields
\[
\Pr[X>0]
\geq
\frac{\mathbb{E}[X]^2}{\mathbb{E}[X^2]}
=1-o(1).
\]
Therefore $\delta(G)\leq 1$ with high probability.
\end{proof}

\section{The above-critical regime} \label{sec:above-critical}

With a multiplicative gap away from the phase transition threshold in \Cref{thm:main}, a (comparably) simpler proof can be obtained. 

\begin{theorem} \label{thm:main-weakened}
Let $p=p(n)\leq 1/2$ with
\[
\lim_{n\to +\infty}\frac{np}{\log n}=+\infty. 
\]
Then $G\sim\mathcal{G}(n,p)$ is a homomorphic core w.h.p.
\end{theorem}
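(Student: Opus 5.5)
By \Cref{prop:hom-core}, it suffices to show that w.h.p.\ $G$ has no proper retraction. We take a union bound over all pairs $(S,r)$ with $S\subsetneq[n]$ and $r\colon [n]\setminus S\to S$. Such a pair extends to a map $r$ that fixes $S$ pointwise.

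Write $d=np$, $s=|S|$ and $m=n-s\geq 1$. First we fix the typical events. Let $\mathcal{E}$ be the event that $\Delta(G)\leq 2d$ and $\omega(G)\leq C\log n/\log(1/p)+1$, where $C$ is a suitable constant (for $p$ close to $1/2$ this gives $\omega(G)\leq 3\log_2 n$). Chernoff \eqref{eq:chernoff-bound} together with a union bound gives $\Pr[\Delta(G)>2d]\leq n\mathrm{e}^{-d/3}=o(1)$, since $d/\log n\to\infty$. The standard first-moment bound on cliques controls $\omega(G)$. It therefore suffices to bound $\Pr[\mathcal{E}\wedge r\text{ is a retraction}]$, summed over all $(S,r)$.

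Next we expose the graph in two stages. First reveal $G[S]$; we may assume that $G[S]$ satisfies the degree and clique bounds. If $r$ is a retraction, every pair $\{x,y\}$ with $x\notin S$ whose image $r(x)r(y)$ is a non-edge of $G[S]$ or a loop (that is, $r(x)=r(y)$) must be absent. The pairs with $x\notin S$ are undetermined by $G[S]$ and independent, so this has conditional probability $(1-p)^{F}\leq \mathrm{e}^{-pF}$, where $F$ counts the forbidden pairs. The number of maps $r$ with a given image is at most $s^{m}$, and the number of choices of $S$ is $\binom{n}{m}\leq n^m$. It therefore suffices to show that $pF\geq 3m\log n$ in each of two cases, since then the total over all $(S,r)$ is at most $\sum_m n^{2m}n^{-3m}=o(1)$.

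\emph{Case of a large image} ($s\geq n/2$). For $x\notin S$ and $y\in S$, the pair $xy$ is forbidden unless $y\in N(r(x))\cup\{r(x)\}$. On $\mathcal{E}$ this leaves at least $s-2d-1$ forbidden pairs per vertex $x$. When $d\leq n/8$, this gives $F\geq m(n/2-2d-1)\geq mn/5$, so $pF\geq md/5\gg m\log n$. When $p$ is of constant order (up to $1/2$), the bound $\Delta\leq 2d$ is useless. There we instead use $\Delta(G)\leq np+O(\sqrt{np\log n})$, which leaves $s-\Delta-1\geq n(1/2-p)-o(n)$ forbidden pairs, and this is too weak when $p=1/2$ and $s\approx n/2$. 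The fix I would try first is to bound the non-neighbourhood of $r(x)$ inside $S$ via concentration of $|S\setminus N(v)|$, which is $\approx s(1-p)\geq s/2$ w.h.p.\ simultaneously over all $S$ of size $\geq n/2$ and all $v$. A union bound over the $2^n$ sets $S$ is affordable because $(1-p)^{s/2\cdot m}$ wins once $p$ is constant.

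\emph{Case of a small image} ($s<n/2$). Here we count the forbidden pairs inside $[n]\setminus S$. Two outside vertices $x,y$ with $r(x)r(y)\notin E(G[S])$ (including $r(x)=r(y)$) must be non-adjacent. Group the outside vertices by their image, with class sizes $a_v$ for $v\in S$, so that $\sum_v a_v=m$. The allowed pairs number at most $\sum_{uv\in E(G[S])}a_ua_v$. By the Motzkin--Straus inequality this is at most $\frac{1}{2}(1-1/\omega)m^2$. Hence $F\geq \frac{m^2}{2\omega}-\frac{m}{2}$, and with $m\geq n/2$ this gives $pF\gtrsim mnp/(4\omega)$. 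Since $\omega=O(\log n/\log(1/p))$, we get $pF\gtrsim m\,d\log(1/p)/\log n$, and we need this to be $\gg m\log n$. This holds when $d\log(1/p)\gg\log^2 n$, which is true for $p\geq n^{-1/2}$, but may fail for $d$ only slightly larger than $\log n$.

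\textbf{Main obstacle.} The main obstacle is the small-image case in the sparse range $\log n\ll d\leq\sqrt{n}$. For this range I would supplement Motzkin--Straus with the edges between $S$ and its complement. Each $x\notin S$ has all its neighbours in $S$ confined to $N(r(x))$, and $s$ is itself small. The vertices with $a_v$ large then force $\Omega(a_v^2)$ absent pairs inside their class, and these alone give $pF\geq p\sum a_v^2/2\geq pm^2/(2s)$, which is plenty when $s\leq d/\log n$. For intermediate $s$ I would combine this with the large-image argument applied to $S$ in place of $[n]$.
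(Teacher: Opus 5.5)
Your architecture matches the paper's: retractions, two-stage exposure of $G[S]$, forced-absent pairs, a union bound over $(S,r)$, boundary pairs for large images and Motzkin--Straus for small images. But the proof is not closed, and both places you leave open are misdiagnosed.

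\textbf{The small-image ``main obstacle'' does not exist.} Your own criterion $d\log(1/p)\gg\log^2 n$ holds throughout the regime, by a simple dichotomy.
If $p\le n^{-1/2}$, then $\log(1/p)\ge\frac12\log n$, so your clique bound gives $\omega(G)\le 2C+1=O(1)$. Hence $pF\gtrsim md/\omega\gg m\log n$, simply because $d/\log n\to\infty$.
If $p\ge n^{-1/2}$, then $d\ge\sqrt n$ while $\omega=O(\log n)$.
At $d$ slightly above $\log n$ one has $\log(1/p)\approx\log n$, which is the favourable case, not the bad one. This dichotomy ($W\le 8$ versus $W\le 8\log n$) is exactly how the paper finishes. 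So your supplementary argument via $pm^2/(2s)$ and ``the large-image argument applied to $S$'' is unnecessary, and as written it is not a proof anyway.

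\textbf{The large-image case with $p$ near $1/2$ and $s$ near $n/2$ is a real hole, and your repair rests on a false claim.} It is not true that w.h.p.\ $|S\setminus N(v)|\approx s(1-p)$ simultaneously for all $S$ with $|S|\ge n/2$.
For $p=1/2$ and any $v$, a set $S\ni v$ of size $\lceil n/2\rceil$ made of $v$ and (most of) $N(v)$ has $|S\setminus N(v)|=O(\sqrt{n\log n})$.
Per-set concentration only fails with probability $\mathrm{e}^{-\Theta(\varepsilon^2 n)}$. That cannot pay for $2^n$ sets, let alone the $\mathrm{e}^{\Theta(n\log n)}$ maps $r$ when $m=\Theta(n)$.
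Such an $S$ with $r(x)=v$ is precisely the configuration you must exclude, so boundary pairs alone cannot do it; you have to use the pairs inside $[n]\setminus S$.

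The fix is just to move the split point. Since $p\le 1/2$, Hoeffding for $\mathrm{Bin}(n-1,1/2)$ gives $\Delta(G)\le 7n/12$ w.h.p., uniformly in $p$. This also removes your case distinction on $d$.
For $s\ge 2n/3$, each outside vertex forces at least $2n/3-7n/12=n/12$ absent pairs into $S$. Then $\sum_{m}\binom{n}{m}s^m\mathrm{e}^{-mnp/12}\le\sum_m\exp\{m(2\log n+1-np/12)\}=o(1)$.
For $s<2n/3$ you have $m>n/3$, which is all Motzkin--Straus needs: $F\ge\frac{m}{2\omega}(m-\omega)\ge n^2/(36\omega)$. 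Then $n^n\mathrm{e}^{-n^2p/(36\omega)}=o(1)$ by the clique-number dichotomy above.

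With these two changes your argument coincides with the paper's.
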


\subsection{Typical structures}
Recall the following crude estimations on the degree and the clique number of a random graph in this regime. 
\begin{lemma} \label{lem:weak-typical-event}
In the same regime of $p=p(n)$ as in \Cref{thm:main-weakened}, the following holds simultaneously w.h.p. for a randomly drawn $\mathcal{G}(n,p)$:
\begin{itemize}
\item[(a)] (maximum degree) $\Delta(G)\leq 7n/12$; 
\item[(b)] (clique number) $\omega(G)\leq\lceil 4\log n/(\log 1/p)\rceil$. 
\end{itemize}
\end{lemma}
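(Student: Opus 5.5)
We prove the two items separately, each by a first-moment (union bound) argument, and then combine them with a union bound over the two failure events. Throughout, $d=np$ with $d/\log n\to+\infty$ and $p\leq 1/2$.

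For (a), fix a vertex $v$. Then $\deg(v)\sim\mathrm{Bin}(n-1,p)$ with mean $\mu=(n-1)p\leq n/2$. Set $t=7n/12$. We cannot simply plug $\mu$ into the absolute Chernoff bound \eqref{eq:chernoff-bound}, since $(\mathrm{e}\mu/t)^t$ is not small when $\mu$ is close to $n/2$. Instead we use the multiplicative form, or equivalently Hoeffding's inequality:
\[
\Pr[\deg(v)\geq \mu+n/12]\leq \exp\{-2(n/12)^2/(n-1)\}=\exp\{-\Omega(n)\}.
\]
Since $\mu+n/12\leq 7n/12$, this controls $\Pr[\deg(v)\geq 7n/12]$. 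A union bound over the $n$ vertices gives failure probability $n\mathrm{e}^{-\Omega(n)}=o(1)$. This step is routine; the only care needed is to use a bound that stays sharp near $p=1/2$. The hypothesis $d/\log n\to\infty$ is not even needed here.

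For (b), set $k=\lceil 4\log n/\log(1/p)\rceil$. The expected number of $k$-cliques satisfies
\[
\binom{n}{k}p^{\binom{k}{2}}\leq \left(np^{(k-1)/2}\right)^k.
\]
Since $k-1\geq 4\log n/\log(1/p)-1$, we get
\[
p^{(k-1)/2}\leq p^{2\log n/\log(1/p)}\cdot p^{-1/2}=n^{-2}p^{-1/2}.
\]
Hence $np^{(k-1)/2}\leq n^{-1}p^{-1/2}$. In our regime $p\geq \log n/n$, so $p^{-1/2}\leq \sqrt{n}$, and the base is at most $n^{-1/2}$. We also have $k\geq 1$, and in fact $k\geq 2$ once $p<1$ and $n$ is large. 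Therefore the expectation is at most $n^{-k/2}=o(1)$. By Markov's inequality, w.h.p. there is no $k$-clique, so $\omega(G)<k\leq$ the stated bound.

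The only mild obstacle is ensuring that the constant $4$ absorbs both the binomial coefficient and the $p^{-1/2}$ correction uniformly over the whole range $\log n/n\ll p\leq 1/2$. The computation above handles this, since $p^{-1/2}\leq\sqrt n$ throughout. Combining (a) and (b) by a union bound completes the proof.
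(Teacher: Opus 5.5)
Your proof is correct and follows the paper's route: Hoeffding's inequality plus a union bound over vertices for (a), and a first-moment bound on the number of cliques for (b). The only difference is that you rule out cliques of size $k=\lceil 4\log n/\log(1/p)\rceil$ itself, using $p\geq 1/n$ to absorb the $p^{-1/2}$ factor and obtaining the slightly stronger $\omega(G)\leq k-1$. The paper instead rules out cliques of size $k+1$, which gives exactly the stated bound and does not use the lower bound on $p$ at all.
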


\begin{proof}
It suffices to show that each event fails with probability $o(1)$, and then apply union bound. 

For (a), note that every vertex degree is dominated by the binomial distribution $\mathrm{bin}(n-1,1/2)$, and then apply Hoeffding's inequality. 

To show (b), for any $W\in\mathbb{N}_{\geq 1}$, by union bound, 
\begin{equation} \label{eq:3-1-t1}
\Pr[\omega(G)\geq W]\leq\binom{n}{W}p^{\binom{W}{2}}\leq\exp\left\{W\log\frac{\mathrm{e}n}{W}-\frac{W(W-1)\log 1/p}{2}\right\}.
\end{equation}
Now let $W=\lceil 4\log n/(\log 1/p)\rceil+1$, so we have $W-1\geq 4\log n/(\log 1/p)$. 
The above is further upper bounded by 
\[
\eqref{eq:3-1-t1}\leq \exp\{W(1-\log n-\log W)\}\leq\exp\{W(1-\log n)\}=o(1). \qedhere
\]
\end{proof}

\subsection{Conditioned on a typical structure}

Over a randomly drawn $G\sim\mathcal{G}(n,p)$, define a ``global'' random event
\[
\mathcal{A}\qquad \defeq \qquad \Delta(G)\leq\frac{7n}{12}\qquad\text{and}\qquad\omega(G)\leq\left\lceil\frac{4\log n}{\log 1/p}\right\rceil. 
\]
By \Cref{lem:weak-typical-event}, $\Pr[\lnot\mathcal{A}]=o(1)$. 

For any map (candidate \textit{proper} retraction) $r\colon [n]\to S_r$ that is identity on $S_r\neq[n]$, define the random event
\[
\mathcal{R}_r\qquad \defeq \qquad \text{$r$ is a retraction}.
\]
By the definition of homomorphic cores (\Cref{prop:hom-core}), we need to show $\Pr\left[\bigvee_{r}\mathcal{R}_r\right]=o(1)$. 
But since $\Pr[\lnot\mathcal{A}]=o(1)$ and
\[
\Pr\left[\bigvee_{r}\mathcal{R}_r\right]
\leq\Pr\left[\left(\bigvee_{r}\mathcal{R}_r\right)\land \mathcal{A}\right]+\Pr[\lnot\mathcal{A}]
=\Pr\left[\bigvee_{r}(\mathcal{R}_r\land \mathcal{A})\right]+\Pr[\lnot A]
\leq\sum_{r}\Pr[\mathcal{R}_r\land \mathcal{A}]+\Pr[\lnot\mathcal{A}], 
\]
it suffices to prove that, summing over all proper $r$,  
\begin{equation}\label{eq:high-deg-goal}
\sum_{r}\Pr[\mathcal{R}_r\land \mathcal{A}]=o(1). 
\end{equation}

We first bound each term $\Pr[\mathcal{R}_r\land \mathcal{A}]$. 
Fix the map $r$ and let $G[S_r]$ be the random induced subgraph of $G$ on $S_r$. 
By the law of total probability,
\begin{equation}\label{eq:total-probability-ra}
\Pr[\mathcal{R}_r\land \mathcal{A}]=\sum_{H}\Pr[\mathcal{R}_r\land \mathcal{A}\mid G[S_r]=H]\cdot\Pr[G[S_r]=H], 
\end{equation}
where the summation goes over all graphs $H$ on the vertex set $S_r$. 
Then observe that, assuming the global event $\mathcal{A}$ happens, the induced subgraph $G[S_r]$ must also satisfy
\[
\Delta(G[S_r])\leq\frac{7n}{12}\qquad\text{and}\qquad\omega(G[S_r])\leq\left\lceil\frac{4\log n}{\log 1/p}\right\rceil, 
\]
and therefore, only terms with graphs $H$ satisfying the above condition survives in the summation of \eqref{eq:total-probability-ra}.  
Let $\mathcal{H}$ be the collection of all such $H$'s. \eqref{eq:total-probability-ra} can be rewritten as
\begin{equation}\label{eq:total-probability-ra-2}
\Pr[\mathcal{R}_r\land \mathcal{A}]=\sum_{H\in\mathcal{H}}\Pr[\mathcal{R}_r\land \mathcal{A}\mid G[S_r]=H]\cdot\Pr[G[S_r]=H]. 
\end{equation}
Dropping $\mathcal{A}$ yields an upper bound: 
\begin{equation}\label{eq:total-probability-ra-3}
\Pr[\mathcal{R}_r\land \mathcal{A}]\leq \sum_{H\in\mathcal{H}}\Pr[\mathcal{R}_r\mid G[S_r]=H]\cdot\Pr[G[S_r]=H]\leq\max_{H\in\mathcal{H}}\Pr[\mathcal{R}_r\mid G[S_r]=H]. 
\end{equation}

\subsection{Probability of a candidate retraction} \label{sec:weak-candidate-retraction}
It is left for us to characterise the quantity
\[\Pr[\mathcal{R}_r\mid G[S_r]=H].\]
Fix the map $r$. 
A random graph $G\sim\mathcal{G}(n,p)$ can be viewed as drawn in two independent stages: reveal all the edges among $S_r$, and then reveal the remaining edges. 
The candidate $r$ is invalidated if and only if any remaining edge is mapped to a non-edge in $H$ under $r$. 
Let 
\[
B(H,r)\defeq\{uv\colon r(u)r(v)\notin E(H)\}\subseteq\binom{[n]}{2}
\]
be the set of edges forced to be absent. 
Then $\Pr[\mathcal{R}_r\mid G[S_r]=H]=(1-p)^{B(H,r)}$ and hence
\begin{equation}\label{eq:total-probability-ra-4}
\Pr[\mathcal{R}_r\land \mathcal{A}]\leq\max_{H\in\mathcal{H}}(1-p)^{B(H,r)}. 
\end{equation}
The quantity $B(H,r)$ has an explicit formula.

\begin{lemma}\label{lem:weak-b-count}
For every $s\in S_r$, define
\[
b_{s}\defeq\left|\left\{ x\in [n]\backslash S \colon r(x)=s \right\}\right| 
\]
the number of vertices mapped by $r$ to a vertex $s$ except itself. 
Then $B(H,r)=B_1(H,r)+B_2(H,r)$ where
\begin{align}
B_1(H,r)&\defeq \sum_{s\in S_r}b_s(|S_r|-\deg_H(s));\label{eq:b1}\\
B_2(H,r)&\defeq \binom{n-|S_r|}{2}-\sum_{st\in E(H)}b_sb_t.\label{eq:b2}
\end{align}
\end{lemma}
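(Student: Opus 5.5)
The identity should follow by classifying the pairs in $B(H,r)$ according to how many endpoints lie in $S_r$, and counting each class directly from the definition of $r$. Here I read $B(H,r)$ as the set of \emph{remaining} pairs, i.e.\ those not contained in $S_r$; pairs inside $S_r$ are already revealed as $H$ and constrain nothing further. For such a pair $uv$ the condition $r(u)r(v)\notin E(H)$ includes the degenerate case $r(u)=r(v)$, since $H$ is loopless. I also read $S$ in the definition of $b_s$ as $S_r$.

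\textbf{Step 1 (pairs with exactly one endpoint in $S_r$).} Take $u=s'\in S_r$ and $v=x\notin S_r$, and let $s=r(x)$. Since $r(s')=s'$, the pair is forced absent if and only if $s'=s$ or $s's\notin E(H)$. So for fixed $x$ with $r(x)=s$, the number of bad partners $s'\in S_r$ is $|S_r|-\deg_H(s)$. The count correctly includes $s'=s$, because $s\notin N_H(s)$. Grouping the outside vertices $x$ by their image $s$, with $b_s$ of them mapped to each $s$, gives exactly $B_1(H,r)=\sum_{s\in S_r} b_s(|S_r|-\deg_H(s))$.

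\textbf{Step 2 (pairs with both endpoints outside $S_r$).} There are $\binom{n-|S_r|}{2}$ such pairs in total. It is easier to count the complement, namely the pairs $\{x,y\}$ with $r(x)r(y)\in E(H)$. Such a pair has $r(x)\neq r(y)$, so it determines a unique edge $st=\{r(x),r(y)\}\in E(H)$. Conversely, each edge $st$ of $H$ arises from exactly $b_sb_t$ unordered pairs $\{x,y\}$ with $r(x)=s$ and $r(y)=t$. There is no double counting, because $s\neq t$ and the edge is unordered. Subtracting this count gives $B_2(H,r)$.

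\textbf{Step 3 (combine).} Adding the two classes yields $B(H,r)=B_1(H,r)+B_2(H,r)$.

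No step is a real obstacle; the argument is bookkeeping. The only care needed is in handling the diagonal cases correctly: $s'=s$ in Step 1, $r(x)=r(y)$ in Step 2, and the exclusion of pairs inside $S_r$.
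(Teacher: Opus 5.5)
Your proof is correct and follows the paper's argument essentially verbatim. You split the unrevealed pairs by whether one or both endpoints lie outside $S_r$. For one-endpoint pairs you count the $|S_r|-\deg_H(s)$ non-neighbours of $s$ (including $s$ itself) for each of the $b_s$ preimages; for two-endpoint pairs you take $\binom{n-|S_r|}{2}$ minus the $b_sb_t$ permitted pairs for each edge $st\in E(H)$. Your explicit restriction of $B(H,r)$ to pairs not contained in $S_r$ is also the right reading, and it is what the paper implicitly uses: under the literal definition the identity would be off by the $\binom{|S_r|}{2}-|E(H)|$ non-edges inside $S_r$.
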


Intuitively speaking, $B_1$ and $B_2$ together count all edges forced to be absent to make $r$ a retraction. 
$B_1$ includes edges with exactly one endpoint in the image $S_r$, and $B_2$ includes edges outside $S_r$. 

\begin{proof}[Proof of \Cref{lem:weak-b-count}]
We first count $B_1(H,r)$. 
Consider $x\in [n]\backslash S_r$ with $r(x)=s$, and another vertex $t\in S_r$. 
Under $r$, the edge $xt$ would map to $st$, and therefore, if $st\notin E(H)$, then the edge $xt$ must be absent, or otherwise $r$ is no longer a homomorphism. 
For a fixed vertex $s\in S$, there are $|S_r|-\deg_H(s)$ vertices $t$ in $S_r$ for which $st$ is not an edge, including $t=s$. 
There are $b_s$ vertices outside $S_r$ that get mapped to $s$, so $B_1$ counts all edges between $S_r$ and $[n]\backslash S_r$ forced to be absent. 
All the other edges between $S_r$ and $[n]\backslash S_r$ are free. 

For $B_2(H,r)$, consider a pair of vertices $x\neq y\in [n]\backslash S_r$ with $r(x)=s$ and $r(y)=t$. 
The edge $xy$ is permitted if and only if $st\in E(H)$. 
There are $\binom{n-|s|}{2}$ pairs outside $|S_r|$. 
For each edge $st\in E(H)$, exactly $b_sb_t$ pairs have one endpoint mapped to $s$ and the other endpoint mapped to $t$ under $r$. 
These are the only pairs outside $S_r$ allowed to be edges, and hence $B_2$ counts all edges outside $S_r$ forced to be absent. 
\end{proof}

In the next two subsections, we bound $B(H,r)$ in two cases based on the image size $|S_r|$ of $r$. 
This partitions the summation \eqref{eq:high-deg-goal} into two parts. 
\begin{lemma}[large image] \label{eq:high-deg-goal-case-1}
We have
\[
\sum_{\substack{r:\\2n/3\leq |S_r|\leq n-1}}\max_{H\in\mathcal{H}}(1-p)^{B(H,r)}=o(1). 
\]
\end{lemma}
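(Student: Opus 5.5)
Throughout, write $m\defeq n-|S_r|\in[1,n/3]$ for the number of vertices outside the image. The bound will come from the term $B_1$ alone: we will show $B(H,r)\ge B_1(H,r)\ge cmn$ for a constant $c>0$, and then beat the number of maps with a union bound.

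For the lower bound on $B_1$, note that every $H\in\mathcal{H}$ satisfies $\Delta(H)\le 7n/12$. Since $|S_r|\ge 2n/3$, each $s\in S_r$ therefore has
\[
|S_r|-\deg_H(s)\ \ge\ \frac{2n}{3}-\frac{7n}{12}\ =\ \frac{n}{12}.
\]
Moreover $\sum_{s}b_s=m$. Plugging both facts into \eqref{eq:b1} gives $B_1(H,r)\ge mn/12$.

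It remains to check that $B_2\ge 0$, so that dropping it is harmless. This holds because $B_2$ counts a set of pairs, namely the pairs outside $S_r$ forced to be absent. Indeed, $\sum_{st\in E(H)}b_sb_t$ counts only pairs $\{x,y\}$ outside $S_r$ with $r(x)r(y)\in E(H)$. Each such pair is counted at most once, because $r(x)\ne r(y)$ and the edge $st$ determines the pair of classes. Hence $B(H,r)\ge mn/12$ uniformly over $H\in\mathcal{H}$, and
\[
\max_{H\in\mathcal{H}}(1-p)^{B(H,r)}\le \exp\{-pmn/12\}.
\]

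Now we count the maps $r$ with exactly $m$ vertices outside the image. We choose the complement of $S_r$ in at most $\binom{n}{m}\le n^m$ ways, and then the images of these $m$ vertices in at most $n^m$ ways. This gives at most $n^{2m}=\exp\{2m\log n\}$ maps. The sum in the statement is therefore at most
\[
\sum_{m\ge 1}\exp\Bigl\{m\Bigl(2\log n-\frac{np}{12}\Bigr)\Bigr\}.
\]
Since $np/\log n\to\infty$, for large $n$ we have $np/12-2\log n\ge np/24$. The sum is then a geometric series bounded by $2\exp\{-np/24\}=o(1)$, as $np\to\infty$.

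The only step requiring care is the uniform lower bound on the forced non-edges, where the maximum-degree condition from \Cref{lem:weak-typical-event}(a) is what creates the gap $n/12$. The point is that the margin $2/3-7/12>0$ forces each outside vertex to miss a linear number of image vertices. That linear penalty swamps the $n^{2}$-per-vertex entropy of the choices. Everything else is routine.
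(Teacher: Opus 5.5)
Your proof is correct and takes essentially the same approach as the paper: both lower-bound $B\ge B_1\ge mn/12$ using the maximum-degree condition, then take a union bound over maps and sum a geometric series. The differences are cosmetic: you count maps by the cruder $n^{2m}$ rather than the paper's $\binom{n}{k}(n-k)^k\le(\mathrm{e}n^2/k)^k$, and you justify $B_2\ge 0$ explicitly where the paper calls it trivial.
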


\begin{lemma}[small image] \label{eq:high-deg-goal-case-2}
We have
\[
\sum_{\substack{r:\\1\leq |S_r|<2n/3}}\max_{H\in\mathcal{H}}(1-p)^{B(H,r)}=o(1). 
\]
\end{lemma}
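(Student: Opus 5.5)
We prove \Cref{eq:high-deg-goal-case-2}, the small-image lemma, using the term $B_2(H,r)$ from \Cref{lem:weak-b-count} together with a crude count of the candidate maps. Fix an image size $k=|S_r|$ with $1\leq k<2n/3$, and write $m=n-k>n/3$ for the number of vertices outside the image. The number of maps $r$ with image of size exactly $k$ is at most $\binom{n}{k}k^{m}\leq 2^n n^{n}=\exp\{O(n\log n)\}$. It therefore suffices to show that, for every such $r$ and every $H\in\mathcal{H}$, the penalty satisfies
\[
(1-p)^{B(H,r)}\leq\exp\{-\Omega(p n^2)\}.
\]
Since $np/\log n\to\infty$, we have $pn^2=\omega(n\log n)$. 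Summing this bound over the $\exp\{O(n\log n)\}$ maps and the at most $n$ values of $k$ then gives $o(1)$.

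To get this bound, we lower-bound $B(H,r)\geq B_2(H,r)$ using the Motzkin--Straus inequality. Set $\omega=\omega(H)\leq\lceil 4\log n/\log(1/p)\rceil$. The weights $b_s\geq 0$ satisfy $\sum_s b_s=m$, so Motzkin--Straus gives
\[
\sum_{st\in E(H)}b_sb_t\leq\frac{1}{2}\Bigl(1-\frac{1}{\omega}\Bigr)m^2.
\]
Hence
\[
B_2\geq\binom{m}{2}-\frac{1}{2}\Bigl(1-\frac1\omega\Bigr)m^2=\frac{m^2}{2\omega}-\frac m2.
\]
With $m>n/3$, this yields $B_2\geq\Omega(n^2/\omega)$, so that
\[
p\,B_2=\Omega\Bigl(\frac{pn^2\log(1/p)}{\log n}\Bigr).
\]
We must compare this with the union-bound cost $n\log n$. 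When $p\leq n^{-\epsilon}$ for a constant $\epsilon>0$, we have $\log(1/p)=\Omega(\log n)$, so $pB_2=\Omega(pn^2)=\omega(n\log n)$ and we are done.

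The main obstacle is the dense end of the range, where $\log(1/p)$ is small compared with $\log n$ (up to $p=1/2$, where $\omega\approx 4\log_2 n$). There $pB_2=\Omega(pn^2\log(1/p)/\log n)$, and we need
\[
\frac{pn\log(1/p)}{(\log n)^2}\to\infty.
\]
This holds comfortably whenever $p\geq n^{-1/2}$, say, since then $pn\geq\sqrt n$, which dominates $(\log n)^2$. In the intermediate range $n^{-1/2}\geq p\geq n^{-\epsilon}$, we instead use $\log(1/p)\geq\epsilon\log n$. So the plan is to split at $p=n^{-1/2}$:
\begin{itemize}
\item for $p\leq n^{-1/2}$, $\omega=O(1)$ and $pB_2=\Omega(pn^2)$;
\item for $p>n^{-1/2}$, $\omega=O(\log n)$ and $pB_2=\Omega(pn^2/\log n)=\Omega(n^{3/2}/\log n)$.
\end{itemize}
In both cases $pB_2=\omega(n\log n)$. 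Finally, we use $(1-p)^{B}\leq e^{-pB}$ and check that the additive $-m/2$ term in $B_2$ is negligible, which holds because $m\leq n$ and $\omega=O(\log n)$. If the count of maps turns out to be too crude near $p\approx\log n/n$, we can add $B_1$ as a fallback: $B_1\geq m(k-\Delta(H))$ by \Cref{lem:weak-typical-event}(a). We do not expect to need it, since $pn^2=\omega(n\log n)$ already suffices.
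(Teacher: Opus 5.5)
Your proposal is correct and follows essentially the same route as the paper. You lower-bound $B\ge B_2$ via Motzkin--Straus with $\omega(H)\le W=\lceil 4\log n/\log(1/p)\rceil$, pay the crude $n^{O(n)}$ count of maps, and split at $p=n^{-1/2}$ (with $W\le 8$ below and $W=O(\log n)$ above), which gives $pB_2/(n\log n)\ge\min\{c\,np/\log n,\;c\sqrt{n}/\log^2 n\}\to\infty$; the intermediate $n^{-\epsilon}$ discussion and the $B_1$ fallback are harmless but unnecessary.
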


\Cref{thm:main-weakened} follows once the above two lemmata are proved. 

\subsection{Large image} \label{sec:weak-large-image}
This subsection proves \Cref{eq:high-deg-goal-case-1}. 
For any $H\in\mathcal{H}$, plugging the condition $2n/3\leq |S_r|\leq n-1$ and the upper bound of $\Delta(H)$ into \eqref{eq:b1} in \Cref{lem:weak-b-count}, we have
\[
B_1(H,r)=\sum_{s\in S_r}b_s(|S_r|-\deg_H(s)))
\geq\sum_{s\in S_r}b_s\left(\frac{2n}{3}-\frac{7n}{12}\right)=\frac{n}{12}\sum_{s\in S_r}b_s
=\frac{(n-|S_r|)n}{12}
\]
where the last equation is by the definition of $b_s$. 
Trivially, we have $B(H,r)\geq B_1(H,r)$ and hence
\begin{equation}\label{eq:high-deg-case-1-term}
(1-p)^{B(H,r)}\leq(1-p)^{(n-|S_r|)n/12}\leq\exp\left\{-\frac{(n-|S_r|)np}{12}\right\}.
\end{equation}

Consider the summation in \Cref{eq:high-deg-goal-case-1}. 
For a fixed size $2n/3\leq \ell\leq n-1$, the number of choices of a set $S\subseteq [n]$ with $|S|=\ell$ is $\binom{n}{\ell}$. 
Once $S$ is fixed, the number of maps that fixes $S$ and maps $[n]\backslash S$ into $S$ is $\ell^{n-\ell}$. 
Therefore, the number of retractions is at most $\binom{n}{\ell}\ell^{n-\ell}$. 
Summing \eqref{eq:high-deg-case-1-term} over all $\ell$ in this range gives
\begin{align*}
&\sum_{\substack{r:\\2n/3\leq |S_r|\leq n-1}}\max_{H\in\mathcal{H}}(1-p)^{B(H,r)}
=\sum_{2n/3\leq \ell\leq n-1}\exp\left\{-\frac{(n-\ell)np}{12}\right\}\binom{n}{\ell}\ell^{n-\ell}
\\
\overset{\text{(a)}}{=}&\sum_{1\leq k\leq n/3}\exp\left\{-\frac{knp}{12}\right\}\binom{n}{k}(n-k)^k
\overset{\text{(b)}}{\leq}\sum_{1\leq k\leq n/3}\exp\left\{-\frac{knp}{12}\right\}\left(\frac{\mathrm{e}n^2}{k}\right)^k\\
=&\sum_{1\leq k\leq n/3}\exp\left\{k\left(2\log n+1-\log k-\frac{np}{12}\right)\right\}\\
\leq&\sum_{k\geq 1}\exp\left\{k\left(2\log n+1-\frac{np}{12}\right)\right\}
=\frac{\exp\{2\log n+1-np/12\}}{1-\exp\{2\log n+1-np/12\}}\\
\overset{\text{(c)}}{=}&n^{-\omega(1)}=o(1).
\end{align*}
Here, (a) is the substitution $k=n-\ell$, (b) is due to the inequality $\binom{n}{k}(n-k)^k\leq\left(\frac{\mathrm{e}n^2}{k}\right)^k$, and (c) is because of the condition $\frac{np}{\log n}=\omega(1)$. 
This concludes the proof of \Cref{eq:high-deg-goal-case-1}. 

\subsection{Small image}
This subsection proves \Cref{eq:high-deg-goal-case-2}. 
The following Motzkin--Straus inequality \cite{MR175813} from combinatorial optimisation is later used. 
A proof is provided here for completeness. 

\begin{lemma}[Motzkin--Straus inequality] \label{lemma:MSInequality}
Let $H$ be a graph with clique number $\omega(H)$, together with non-negative weights $\{b_v\}_{v\in V(H)}$ on vertices satisfying $\sum_{v\in V(H)}b_v=k>0$. 
Then
\[
\sum_{uv\in E(H)}b_ub_v\leq\frac{k^2}{2}\left(1-\frac{1}{\omega(H)}\right). 
\]
\end{lemma}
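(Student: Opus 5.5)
We prove the Motzkin--Straus inequality by the classical weight-shifting argument. Define $f(b)\defeq\sum_{uv\in E(H)}b_ub_v$ on the compact simplex $\Delta_k\defeq\{b\in\mathbb{R}_{\geq 0}^{V(H)}\colon \sum_v b_v=k\}$. Since $f$ is continuous, it attains its maximum. Among all maximisers we choose one, $b^\ast$, whose support $T\defeq\{v\colon b^\ast_v>0\}$ has the minimum number of vertices. The aim is to show that $T$ is a clique. Once this holds, $|T|\leq\omega(H)$, and the bound follows from an elementary computation.

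\textbf{Step 1: the support is a clique.} Suppose for contradiction that $u,w\in T$ are distinct and $uw\notin E(H)$. We move weight along the line $b_u\mapsto b^\ast_u+t$, $b_w\mapsto b^\ast_w-t$, keeping all other coordinates fixed. Since $uw$ is not an edge, the term $b_ub_w$ does not appear in $f$, so $f$ is \emph{affine} in $t$. Its slope is $\sum_{x\in N(u)}b^\ast_x-\sum_{x\in N(w)}b^\ast_x$. The point $t=0$ is interior to the feasible interval $[-b^\ast_u,b^\ast_w]$ and is a maximiser, so the slope must be zero. Hence $f$ is constant along this segment. Moving to the endpoint $t=b^\ast_w$ gives another maximiser whose support is $T\setminus\{w\}$. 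This contradicts the minimality of $|T|$. This is the only step needing care, namely checking that non-adjacency makes $f$ affine rather than quadratic; otherwise it is routine.

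\textbf{Step 2: the bound on a clique.} Let $m\defeq|T|\leq\omega(H)$. Since $T$ is a clique,
\[
f(b^\ast)=\sum_{\{u,v\}\subseteq T}b^\ast_ub^\ast_v=\frac{1}{2}\Bigl(k^2-\sum_{v\in T}(b^\ast_v)^2\Bigr).
\]
By Cauchy--Schwarz, $\sum_{v\in T}(b^\ast_v)^2\geq k^2/m$. Therefore
\[
f(b^\ast)\leq\frac{k^2}{2}\Bigl(1-\frac{1}{m}\Bigr)\leq\frac{k^2}{2}\Bigl(1-\frac{1}{\omega(H)}\Bigr),
\]
using monotonicity in $m$. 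Since every $b\in\Delta_k$ satisfies $f(b)\leq f(b^\ast)$, this proves the lemma.

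If $H$ has no vertices, the hypothesis $k>0$ cannot hold, so that case does not arise. If $H$ has no edges, then $\omega(H)=1$ and both sides are $0$, so the inequality holds trivially.
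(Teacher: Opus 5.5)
Your proof is correct and follows essentially the same route as the paper: both use the fact that the objective is affine along a weight shift between two non-adjacent vertices to consolidate the support onto a clique, and then finish with Cauchy--Schwarz. The only difference is packaging. The paper runs the shifting iteratively from the given weights, so no compactness is needed. You instead take a maximiser of minimal support, which also handles the tie case cleanly, where the paper's claim of a strict increase is not quite accurate.
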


\begin{proof}
It suffices to prove for $k=1$ by normalising all weights. 

Repeat the following step, until there does not exist any pair of vertices $x,y$ of positive weights $b_x,b_y$ that are non-adjacent. 
We may write
\[
\sum_{uv\in E(H)}b_ub_v=\left(\sum_{u\in N(x)}b_u\right)b_x+\left(\sum_{v\in N(y)}b_v\right)b_y+C
\]
where $C$ does not depend on $b_x$ or $b_y$. 
If $\sum_{u\in N(x)}b_u>\sum_{v\in N(y)}b_v$, we update $b_x\gets b_x+b_y$ and $b_y\gets 0$; otherwise, update $b_x\gets 0$ and $b_y\gets b_x+b_y$. 
This procedure strictly increase the total count $\sum_{uv\in E(H)}b_ub_v$.

The above procedure eventually halts because it strictly decreases the size of the positive support. 
And when it halts, the support is a clique $K_r$ of some size $r\leq\omega(H)$. 
Let $\{b'_u\}$ be the final weights.  
On this clique, the Cauchy--Schwarz inequality gives
\[
\sum_{uv\in E(H)}b_ub_v \leq \sum_{xy\in E(K_r)}b'_ib'_j
=\frac{1}{2}\left(1-\sum_{x\in V(K_r)} b_i^2\right)
\overset{\text{C--S}}{\leq}\frac{1}{2}\left(1-\frac{1}{r}\right)
\leq\frac{1}{2}\left(1-\frac{1}{\omega(H)}\right). \qedhere
\]
\end{proof}

Write $W=\left\lceil\frac{4\log n}{\log 1/p}\right\rceil$ so $\omega(H)\leq W$ for any $H\in\mathcal{H}$. 
We have
\begin{align*}
B_2(H,r)&=\binom{n-|S_r|}{2}-\sum_{st\in E(H)}b_sb_t
\overset{\text{(d)}}{\geq} \binom{n-|S_r|}{2}-\frac{(n-|S_r|)^2}{2}\left(1-\frac{1}{W}\right)\\
&=\frac{n-|S_r|}{2W}(n-|S_r|-W)\geq\frac{n^2}{36W}
\end{align*}
for all sufficiently large $n$ since $W=O(\log n)$. 
Here, (d) is due to \Cref{lemma:MSInequality}. 
Again, we trivially have $B(H,r)\geq B_2(H,r)$ and hence
\begin{equation}\label{eq:high-deg-case-2-term}
(1-p)^{B(H,r)}\leq(1-p)^{n^2/(36W)}\leq\exp\left\{-\frac{n^2p}{36W}\right\}.
\end{equation}

Consider the summation in \Cref{eq:high-deg-goal-case-2}. 
Trivially, the number of maps from $[n]$ to $[n]$ is $n^n$. 
Using \eqref{eq:high-deg-case-2-term}, we have
\[
\sum_{\substack{r:\\1\leq |S_r|<2n/3}}\max_{H\in\mathcal{H}}(1-p)^{B(H,r)}\leq n^n\exp\left\{-\frac{n^2p}{36W}\right\}=\exp\left\{-n\log n\left(\frac{np}{36W\log n}-1\right)\right\}.
\]
We claim that ${np}/({W\log n})=\omega(1)$, which immediately implies that the above quantity is $o(1)$ and therefore finishes the proof of \Cref{eq:high-deg-goal-case-2}. 
Indeed, for all $n$ such that $p(n)<\sqrt{n}/n$, we have $W\leq 8$, and so ${np}/({W\log n})\geq np/(8\log n)$. 
For all $n$ such that $p(n)\geq\sqrt{n}/n$, since $p(n)\leq 1/2$, we have $W\leq\lceil (4/\log 2)\log n\rceil\leq 8\log n$,  and so ${np}/({W\log n})\geq \sqrt{n}/(8\log^2 n)$. This gives
\[
\frac{np}{W\log n}\geq\min\left\{\frac{np}{8 \log n},\frac{\sqrt{n}}{8\log^2 n}\right\}. 
\]
Both terms are $\omega(1)$ in the regime of $p$ as \Cref{thm:main-weakened}.

\section{The near-critical regime} \label{sec:near-critical}

For the rest of this manuscript, assume
\begin{equation} \label{eq:tight-condition}
\begin{gathered}
d\defeq np = \log n+\log\log n+w \qquad\text{ with }\qquad \lim_{n\to+\infty}w(n)=+\infty, \qquad\text{and} \\
d \leq \log^2 n,
\end{gathered}
\end{equation}
as otherwise \Cref{thm:main-weakened} applies. 

\subsection{Typical structure near the critical threshold}
We need some more standard facts about certain graph parameters in the above regime. 

Recall the following basic definitions. 
The \emph{independence number} $\alpha(G)$ of a graph $G$ is the number of vertices of its largest independent set, or alternatively, the clique number of its complement graph. 
The \emph{chromatic number} $\chi(G)$ of a graph $G$ is the minimum number $q$ that $G$ has a proper $q$-vertex-colouring. 

\begin{lemma} \label{lem:near-critical-properties}
Under the condition \eqref{eq:tight-condition}, the following holds simultaneously w.h.p. for a randomly drawn $G\sim\mathcal{G}(n,p)$:
\begin{itemize}
\item[(a)] (minimum degree) $\delta(G)\geq 2$;
\item[(b)] (maximum degree) $\Delta(G)\leq 4d$;
\item[(c)] ($4$-cycle count) at most $4d^5$ vertices lie on a $4$-cycle;
\item[(d)] (chromatic number) $\chi(G)\geq d/(5 \log d)$;
\item[(e)] (subgraph edge density) with $r_0\defeq{30\log d}/{\log\log d}$, 
every $U\subseteq [n]$ with $|U|\leq 8/p$ satisfies $|E(G[U])|<r_0|U|/2$. 
\end{itemize}
\end{lemma}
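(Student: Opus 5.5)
We prove each of the five properties fails with probability $o(1)$ and then take a union bound. Throughout, $d=np$ with $\log n\le d\le \log^2 n$, so $p=d/n=o(1)$.

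\textbf{(a) and (b).} Item (a) is exactly \Cref{thm:min-deg-phase-transition}. For (b), each degree is $\mathrm{Bin}(n-1,p)$ with mean at most $d$. The upper Chernoff bound \eqref{eq:chernoff-bound} with $t=4d$ gives $\Pr[\deg(v)\ge 4d]\le(\mathrm{e}/4)^{4d}$. Since $4\log(4/\mathrm{e})>1.5$ and $d\ge\log n$, this is at most $n^{-1.5}$, and a union bound over $n$ vertices suffices.

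\textbf{(c).} Let $Y$ be the number of $4$-cycles. Then $\mathbb{E}[Y]\le n^4p^4/8=d^4/8$. Each cycle covers $4$ vertices, so the number of vertices on $4$-cycles is at most $4Y$, and Markov's inequality gives $\Pr[4Y>4d^5]\le \mathbb{E}[Y]/d^5=O(1/d)=o(1)$.

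\textbf{(d).} We use $\chi(G)\ge n/\alpha(G)$, so it suffices to show $\alpha(G)\le 5n\log d/d$. Put $k=\lceil 5n\log d/d\rceil$. The first moment gives
\[
\binom{n}{k}(1-p)^{\binom{k}{2}}\le\exp\left\{k\left(\log\frac{\mathrm{e}n}{k}-\frac{p(k-1)}{2}\right)\right\}.
\]
Here $\log(\mathrm{e}n/k)\le \log d+1$, while $p(k-1)/2\approx 2.5\log d$. The exponent is therefore at most $-k\log d$, which tends to $-\infty$. This is the standard bound $\alpha\le 2\log(np)/p$ with slack.

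\textbf{(e).} This is the main step. We first union bound over sizes $u=|U|\le 8/p$ and sets $U$, bounding the probability that $G[U]$ has at least $m=\lceil r_0u/2\rceil$ edges by
\[
\binom{n}{u}\binom{\binom u2}{m}p^m\le\left(\frac{\mathrm{e}n}{u}\right)^u\left(\frac{\mathrm{e}up}{r_0}\right)^{m}.
\]
Since $up\le 8$, the second factor is at most $(8\mathrm{e}/r_0)^{r_0u/2}$. The per-vertex exponent is then
\[
\log\frac{\mathrm{e}n}{u}-\frac{r_0}{2}\log\frac{r_0}{8\mathrm{e}}.
\]
For large $u$ (say $u\ge n/d^{2}$) we have $\log(\mathrm{e}n/u)=O(\log d)$. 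Meanwhile $\frac{r_0}{2}\log r_0\approx 15\frac{\log d}{\log\log d}\cdot\log\log d=15\log d$ dominates, so these terms decay geometrically in $u$.

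For small $u$ the term $\log(n/u)$ can be as large as $\log n$, which the fixed $r_0$ does not beat. These sets must be handled separately. Any $U$ with $|E(G[U])|\ge r_0|U|/2$ contains a subgraph of minimum degree at least $r_0/2$, which has at least $r_0/2+1$ vertices, so a set of bounded size is impossible once $u<r_0$. For intermediate $u$, we use that $m\ge r_0u/2$ forces roughly $r_0/2\gg1$ excess edges per vertex. The exact bound is $\binom{n}{u}\binom{\binom u2}{m}p^m\le\exp\{u\log n+m\log(\mathrm{e}up/r_0)\}$, and with $p=d/n$ the factor $p^m$ contributes $n^{-m}=n^{-r_0u/2}$, which overwhelms $n^u$ since $r_0/2>2$. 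To make this precise, we redo the estimate keeping $p=d/n$ explicit:
\[
\left(\frac{\mathrm{e}n}{u}\right)^u\left(\frac{\mathrm{e}u d}{r_0 n}\right)^{m}\le\left[\frac{\mathrm{e}n}{u}\left(\frac{\mathrm{e}ud}{r_0n}\right)^{r_0/2}\right]^u .
\]
The bracket is $\mathrm{e}\,(u/n)^{r_0/2-1}(\mathrm{e}d/r_0)^{r_0/2}$. When $u\le n/d^{3}$ this is at most $\mathrm{e}d^{-3(r_0/2-1)+r_0/2}$, which is tiny, and when $u$ is larger the previous paragraph applies. The remaining work is checking that the two ranges overlap with room for the sum over $u$, and this choice of threshold for $u$ is the delicate point.
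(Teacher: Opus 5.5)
Your proof is correct and follows the paper's argument item by item: the Chernoff bound for (b), Markov's inequality on the number of $4$-cycles for (c), a first-moment bound on $\alpha(G)$ combined with $\chi(G)\geq n/\alpha(G)$ for (d), and a first-moment union bound over sets $U$ with the per-set estimate $\bigl(\mathrm{e}pu/r_0\bigr)^{r_0u/2}$ for (e). In (e), your split into small and large $u$ is unnecessary, and so is the threshold matching you flag as delicate. Since $r_0>2$, your bracket $\mathrm{e}(u/n)^{r_0/2-1}(\mathrm{e}d/r_0)^{r_0/2}$ is increasing in $u$, so evaluating it at $u=8/p$ gives the uniform bound $\mathrm{e}(d/8)(8\mathrm{e}/r_0)^{r_0/2}\leq d^{-6}$ for large $d$; this is exactly the paper's observation that $\log(\mathrm{e}d/x)+\frac{r_0}{2}\log(\mathrm{e}x/r_0)$ is increasing in $x=pu$ (and in any case your large-$u$ estimate already works verbatim for all $u\geq n/d^{3}$, so your two ranges do cover everything).
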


\begin{proof}
By a union bound, it suffices to prove that the failure probability of each item is $o(1)$.

\begin{itemize}
\item [(a)] Exactly the same as \Cref{thm:min-deg-phase-transition}. 
\item [(b)] This is derived from the union bound and Chernoff bound \eqref{eq:chernoff-bound}: 
\[
\Pr[\Delta(G)\geq 4d]\leq n\cdot\Pr[\deg_G(v)\geq 4d]\leq n\cdot 0.08^{d}\leq n\cdot 0.08^{\log n}=o(n^{-1.52})=o(1).
\]
\item [(c)] By the linearity of expectation, the expected number $Z$ of $4$-cycles is $p^4\cdot n(n-1)(n-2)(n-3)/8\leq d^4/8$. 
By Markov's inequality, the probability $\Pr[Z>d^5]\leq 1/(8d)=o(1)$, where at most $4d^5$ vertices lie on a $4$-cycle. 
\item [(d)] The expected number of independent sets of size $k$ is
\[
\binom{n}{k}(1-p)^{\binom{k}{2}}\leq\exp\left\{k\log\frac{\mathrm{e}n}{k}-\frac{pk(k-1)}{2}\right\}. 
\]
For any $k\geq 5\log d/p$, the above quantity tends to $0$, meaning w.h.p.~the independence number $\alpha(G)<5\log d/p$, and therefore the chromatic number $\chi(G)\geq d/(5 \log d)$.
\item [(e)] Let $u=|U|\leq 8/p$. 
The number of edges $|E(G[U])|$ is subject to the binomial distribution $\mathrm{Bin}(u(u-1)/2,p)$. 
By a union bound over all $U$ of size $u$, the probability that (e) is violated by a fixed $u$ is at most
\[
\binom{n}{u}\Pr\left[\mathrm{Bin}\left(\frac{u(u-1)}{2},p\right)\geq\frac{r_0 u}{2}\right]. 
\]
By \eqref{eq:chernoff-bound}, this quantity is bounded above by
\begin{equation} \label{eq:near-critical-typical-proof-5}
\binom{n}{u}\exp\left\{\frac{r_0 u}{2}\log\frac{\mathrm{e}pu}{r_0}\right\}\leq\exp\left\{u\left(\log\frac{\mathrm{e}d}{pu}+\frac{r_0}{2}\log\frac{\mathrm{e}pu}{r_0}\right)\right\}. 
\end{equation}
Here, the last inequality due to $\binom{n}{u}\leq(\mathrm{e}n/u)^u$ and $d=np$. 
The function $\log(\mathrm{e}d/x)+r_0/2\cdot\log(\mathrm{e}x/r_0)$ is increasing since $r_0>2$. 
At $x=8$, its value is bounded by
\[
\log\frac{\mathrm{e}d}{8}-\frac{r_0}{2}\log\frac{r_0}{8\mathrm{e}}\leq\log\frac{\mathrm{e}d}{8}-\frac{r_0}{4}\log\log d\leq -6\log d
\]
for sufficiently large $d$. 
As $pn\leq 8$, we upper bound \eqref{eq:near-critical-typical-proof-5} by
\[
\leq\exp\{-6u\log d\}=d^{-6u}.
\]
Summing all possible $u$ yields a bound $\sum_{u=1}^{8/p}d^{-6u}\leq d^{-6}=o(1)$ on the probability that (e) is violated. \qedhere
\end{itemize}
\end{proof}

An interesting consequence of the above typical graph parameters, especially Item (a)(b)(c), is the boundary size of the image $S_r$ of a retraction $r$. 
\begin{lemma} \label{lem:critical-boundary}
Let $r$ be a retraction to $S_r$. 
If events (a)(b)(c) in \Cref{lem:near-critical-properties} happen, then the number of edges between $S_r$ and $[n]\backslash S_r$ is upper bounded by $16d^6+n-|S_r|$. 
\end{lemma}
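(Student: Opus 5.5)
The plan is to charge each boundary edge to the outside vertex it touches, and to split the outside vertices into two classes. Write $T\defeq[n]\backslash S_r$. Every edge between $S_r$ and $T$ has exactly one endpoint $x\in T$, so the number of such edges equals $\sum_{x\in T}|N(x)\cap S_r|$. Call $x\in T$ \emph{bad} if $|N(x)\cap S_r|\geq 2$, and \emph{good} otherwise. Each good vertex contributes at most $1$ to the sum, so good vertices together contribute at most $|T|=n-|S_r|$. By (b), each bad vertex contributes at most $\Delta(G)\leq 4d$. It therefore suffices to show that there are at most $4d^5$ bad vertices, since then the total is at most $4d\cdot 4d^5+n-|S_r|=16d^6+n-|S_r|$.

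The key step is to show that every bad vertex lies on a $4$-cycle, after which (c) bounds the number of bad vertices by $4d^5$. Let $x\in T$ be bad, and take two distinct neighbours $a,b\in N(x)\cap S_r$. Put $s\defeq r(x)\in S_r$. Since $x\notin S_r$, we have $s\neq x$. Because $r$ is a homomorphism fixing $S_r$ pointwise, the edges $xa$ and $xb$ map to $sa$ and $sb$, so $sa,sb\in E(G)$. Since $G$ is loopless, this also gives $s\neq a$ and $s\neq b$. The four vertices $x,a,s,b$ are therefore distinct, and $x$--$a$--$s$--$b$--$x$ is a $4$-cycle in $G$ that contains $x$. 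Hence every bad vertex lies on a $4$-cycle.

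Event (a) is not actually needed for this particular bound; it is only listed for uniformity with the later use of the lemma. The only point requiring care is the distinctness of $s$ from $a$ and $b$, which, as shown above, follows from looplessness because $sa$ and $sb$ are edges. I expect no real obstacle beyond that.
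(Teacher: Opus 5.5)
Your proof is correct and follows the paper's argument essentially verbatim: outside vertices with at most one neighbour in $S_r$ contribute at most $n-|S_r|$ edges. Any outside vertex $x$ with two neighbours $a,b\in S_r$ lies on the $4$-cycle $x\,a\,r(x)\,b$, so by (b) and (c) these vertices contribute at most $4d\cdot 4d^5=16d^6$. You are also right that (a) is not needed here (the paper cites it without using it), and your explicit check that $r(x)\notin\{a,b\}$ by looplessness is a bit more careful than the paper's wording.
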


\begin{proof}
By \Cref{lem:near-critical-properties}(a), every vertex $x\in[n]\backslash S_r$ has at least two distinct neighbours. 
Suppose $x$ has two distinct neighbours $u,v\in S$. 
By the definition of retraction, the edges $r(x)r(u)=r(x)u$ and $r(x)r(v)=r(x)v$ exist, and $r(x)\neq x,u,v$. 
Therefore, the $4$-cycle $x\to u\to r(x)\to v\to x$ exist, so $x$ lies on a $4$-cycle. 
Consequently, any vertex in $[n]\backslash S_r$ that does not lie on any $4$-cycle must have at most one neighbours in $S$. 
Trivially, by \Cref{lem:near-critical-properties}(b), any vertex in $[n]\backslash S_r$ that lies on a $4$-cycle has at most $4d$ neighbours in $S$. 
The lemma then follows by \Cref{lem:near-critical-properties}(c). 
\end{proof}

\subsection{Conditioned on a typical structure}
In a similar manner as the proof of \Cref{thm:main-weakened}, with $G\sim\mathcal{G}(n,p)$ define the ``global'' event
\[
\mathcal{B}\qquad\defeq\qquad\text{All items in \Cref{lem:near-critical-properties} happen simutaneously}. 
\]
By \Cref{lem:near-critical-properties}, we have $\Pr[\mathcal{B}]=1-o(1)$. 

Like in \Cref{sec:weak-candidate-retraction}, we subdivide the event that a proper retraction exists into three cases depending on the size of the retraction's image. 
For any map (candidate proper retraction) $r\colon [n]\to S_r$ that is identity on $S_r\neq[n]$, we call the map is of Type I, II or III, based on the following criteria. 
\begin{align*}
1       \quad\leq\quad&   |[n]\backslash S_r|     \quad<   \quad   64d^6;\tag{Type I}\\
64d^6   \quad\leq\quad&   |[n]\backslash S_r|     \quad\leq\quad   n-8/p;\tag{Type II}\\
n-8/p     \quad<   \quad&   |[n]\backslash S_r|     \quad\leq\quad   n-1.\tag{Type III}
\end{align*}
We assume $n$ large enough so the above inequalities are well-defined. 
Define $\mathcal{R}_r$ to be the random event that $r$ is a retraction. 
For $\star\in\{\mathrm{I,II,III}\}$, define the events
\[
\mathcal{E}_{\star}\qquad\defeq\qquad\text{There is an $r$ of type $\star$ that $\mathcal{R}_r$ happens}. 
\]
Similar to \eqref{eq:high-deg-goal}, we only need to bound
\begin{equation}\label{eq:critical-goal}
\left(\sum_{\substack{r\colon\text{\textrm{\textnormal{Case I}}}}}\Pr[\mathcal{R}_r\land \mathcal{B}]\right)+\Pr[\mathcal{E}_{\mathrm{II}}\land \mathcal{B}]+\Pr[\mathcal{E}_{\mathrm{III}}\land \mathcal{B}]=o(1). 
\end{equation}

\begin{lemma} \label{lem:critical-goal-1}
We have
\begin{equation}\label{eq:goal-part-1}
\sum_{\substack{r\colon\text{\textrm{\textnormal{Case I}}}}}\Pr[\mathcal{R}_r\land \mathcal{B}]=o(1). 
\end{equation}
\end{lemma}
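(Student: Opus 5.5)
We bound $\Pr[\mathcal{R}_r\land\mathcal{B}]$ summed over all Type I maps by a union bound organised around witnesses. A union bound over $r$ alone is too weak. Fix $k\defeq n-|S_r|<64d^6$ and write $X\defeq[n]\backslash S_r$. As in \Cref{sec:weak-candidate-retraction}, we reveal $G[S_r]=H$ first; on $\mathcal{B}$ we have $\Delta(H)\leq 4d$. Then, by \eqref{eq:b1} of \Cref{lem:weak-b-count}, $r$ forces at least $B_1(H,r)\geq k(n-k-4d-1)=k(n-o(n/\log n))$ pairs between $X$ and $S_r$ to be absent. This contributes a factor of roughly $e^{-d}$ per vertex of $X$. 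Since $e^{-d}=1/(n\log n\, e^{w})$, it pays for the $n^k/k!$ choices of $X$ with only $e^{-w}/\log n$ to spare per vertex. So the remaining choices, namely the values $r(x)$ and the at least two edges at each $x$ guaranteed by \Cref{lem:near-critical-properties}(a), must cost $O(1)$ per vertex, or be compensated by extra probability.

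For each $x\in X$ we fix a witness: two edges at $x$, chosen canonically, that are present in $G$. By the retraction property these edges must map into $E(H)$ (as in the proof of \Cref{lem:critical-boundary}). We sort the vertices $x\in X$ by how many neighbours they have in $S_r$.

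\emph{Two or more neighbours $u,v\in S_r$.} We choose $r(x)$ in at most $n$ ways and $u,v\in N_H(r(x))$ in at most $16d^2$ ways. The edges $xu,xv$ cost $p^2$, and the absence factor costs $e^{-d}$. Together with the choice of $x$ this gives $O(n\cdot n\cdot d^2p^2e^{-d})=O(d^4e^{-d})$, which is negligible.

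\emph{At most one neighbour in $S_r$.} The witness must then use an edge $xy$ with $y\in X$. Such an edge costs only a factor $p$ while $y$ is chosen from $k$ vertices, so the edge contributes $kp\leq 64d^7/n$. This is the source of the gain. If $x$ also has an edge $xs$ with $s\in S_r$, we choose $s$ in $n$ ways at cost $p$; this adds a factor $d$, matching the $(1+d)e^{-d}$ probability of having at most one $S_r$-neighbour.

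Next we count the maps on the witness graph $F$ restricted to $X$ by a spanning-forest argument. In each component of $F[X]$ that has a vertex with an $S_r$-witness edge $xs$, the image $r(x)$ lies in $N_H(s)$, so it has at most $4d$ choices. Every further vertex, reached along a forest edge $yz$, has $r(z)\in N_H(r(y))$ and so at most $4d$ choices. In a component with no $S_r$-witness edge, one root vertex has $n$ choices for its image. However, all its vertices then have no neighbour in $S_r$. They pay $e^{-d}$ without the factor $(1+d)$, and there are at least two of them, each with an internal edge costing $kp$. Collecting factors, each vertex of $X$ contributes at most
\[
O\!\left(n\cdot d e^{-d}\cdot 4d\cdot\frac{d^{7}}{n}\right)=O\!\left(\frac{d^{9}}{n\log n}\right),
\]
and a root of an unanchored component contributes an extra $n$ that is absorbed by the missing $d$ factors and the extra $kp$ edges. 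Summing over $1\leq k<64d^6$ then gives $o(1)$, which proves \eqref{eq:goal-part-1}.

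The main obstacle is this bookkeeping. One has to make sure that each witness edge's probability $p$ and each choice of $r(\cdot)$ is charged exactly once, so that edges counted from both endpoints are not double-charged. One also has to check that unanchored components, whose free root has $n$ choices, really are compensated. If the per-vertex factor turns out too tight, I would fall back on \Cref{lem:critical-boundary}: it shows that at most $4d^5$ vertices of $X$ have two or more $S_r$-neighbours, and this lets us treat that class separately.
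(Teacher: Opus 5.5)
Your route is the paper's. You reveal $G[S_r]=H$, take two canonical witness edges at every $x\in X$, count maps on $F[X]$ along a spanning forest (a free root, then at most $4d$ choices per vertex), and pay with the $B_1$ absence factor, which leaves only $\mathrm{e}^{-w}/\log n$ per vertex. The gap is the bookkeeping you flag yourself, and the per-vertex bound $O(d^{9}/(n\log n))$ you assert is false.

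It assumes every vertex with at most one $S_r$-neighbour owns a private internal witness edge worth $kp$. Internal witness edges are in fact shared by their endpoints:
\begin{itemize}
\item If $x,y\in X$ each have one $S_r$-neighbour and pick each other, then under your charging (boundary endpoints chosen among $n$ vertices) the single edge $xy$ yields one factor $kp$ for two vertices.
\item An anchored tree component of $F[X]$ on $a$ vertices has only $a-1$ internal edges.
\item For an unanchored component, the root's extra factor $n$ cannot be absorbed by missing $d$ factors, since powers of $d$ do not beat $n$. Nor are there extra $kp$ edges: a cycle component has exactly $a$ edges, one per vertex. The true gain there is $n^{1-a}$, not $n^{-a}$. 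It suffices only because every vertex has degree at least $2$ in $F[X]$, which forces $a\geq 3$ and hence $n^{1-a}\leq n^{-2a/3}$.
\end{itemize}

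What is missing is an amortised count, which the paper isolates as \Cref{lem:witness-count}. With $\ell=k$, $e_1$ boundary witness edges, $e_2$ internal ones and $\kappa$ components of $F[X]$, it states $e_1+e_2-\kappa\geq 2\ell/3$. The proof is componentwise from $2b+c\geq 2a$. A tree component has $c\geq 2$, so $b+c-1\geq a$. A non-tree component has $b\geq a\geq 3$, so $b+c-1\geq 2a/3$.

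The paper then fixes $r$ first: roots in $|S|$ ways, all other vertices in at most $4d$ ways. Only afterwards does it choose boundary edges, among the at most $4d\ell$ pairs $xv$ with $r(x)v\in E(H)$. Every witness edge, internal or boundary, then costs $\mathrm{poly}(d)/n$ and every root at most $n$. This gives $n^{\kappa}p^{e_1+e_2}\leq n^{-2\ell/3}d^{2\ell}$, a uniform gain of $n^{-2/3}$ per vertex.

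This argument needs neither your split by the number of $S_r$-neighbours nor an $S_r$-preferring witness rule. Your split does need such a rule, which you never specify. Under the paper's smallest-label rule, a component with no $S_r$-witness edge can still contain vertices with $S_r$-neighbours. With that rule and the amortisation above, your scheme would give roughly $n^{-1/2}$ per vertex, which still suffices. As written, however, the key inequality is absent, and the fallback to \Cref{lem:critical-boundary} does not address it.
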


\begin{lemma} \label{lem:critical-goal-2}
We have $\Pr[\mathcal{E}_{\mathrm{II}}\land \mathcal{B}]=o(1)$. 
\end{lemma}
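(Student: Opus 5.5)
The plan is to show that Type II retractions are ruled out by a simple edge-expansion property of $G$, which we verify by a union bound over the \emph{image} $S$ alone, not over maps $r$. Write $T=[n]\backslash S_r$ and $k=|T|$, so that in Type II we have $64d^6\leq k\leq n-8/p$, equivalently $|S_r|\geq 8/p$. On $\mathcal{B}$, \Cref{lem:critical-boundary} applies to any retraction $r$ and gives
\[
e(S_r,T)\leq 16d^6+k\leq \tfrac{5}{4}k,
\]
using $k\geq 64d^6$. So
\[
\mathcal{E}_{\mathrm{II}}\land\mathcal{B}\subseteq\{\exists\, T\subseteq[n],\ 64d^6\leq |T|=k\leq n-8/p,\ e([n]\backslash T,T)\leq 5k/4\},
\]
and it suffices to show that this last event has probability $o(1)$.

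For a fixed $T$ of size $k$, $e([n]\backslash T,T)\sim\mathrm{Bin}(k(n-k),p)$ with mean $\mu=k\cdot m$, where $m\defeq p(n-k)\geq 8$ because $n-k\geq 8/p$. Since $5k/4\leq\mu$, the lower-tail Chernoff bound \eqref{eq:chernoff-bound} gives
\[
\Pr\left[e\leq \tfrac54 k\right]\leq \exp\Bigl\{-k\bigl(m-\tfrac54\log(\tfrac{4\mathrm{e}m}{5})\bigr)\Bigr\}.
\]
Multiplying by $\binom nk\leq(\mathrm{e}n/k)^k$, it suffices that for every $k$ in the range,
\[
g(k)\defeq m-\tfrac54\log\tfrac{4\mathrm{e}m}{5}-\log\tfrac{\mathrm{e}n}{k}\geq c_n\to+\infty,
\]
or at least $g(k)\geq$ a positive constant together with $k\to\infty$. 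Since $k\geq 64d^6\to\infty$, either condition makes $\sum_k \mathrm{e}^{-k g(k)}=o(1)$.

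The main obstacle is checking $g(k)$ uniformly across the range; I would split into three cases.
\begin{itemize}
\item[(i)] $k\leq n/\log^3 n$. Then $m\geq d(1-1/\log^3 n)\geq d-1$ because $d\leq\log^2 n$. Also $\log(\mathrm{e}n/k)\leq \log n+1-\log(64d^6)\leq \log n-6\log d$. Hence $g\geq w+\log\log n+6\log d-\tfrac54\log(\mathrm{e}d)-O(1)\to\infty$.
\item[(ii)] $n/\log^3 n<k\leq n/2$. Then $\log(\mathrm{e}n/k)\leq 3\log\log n+1$ while $m\geq d/2\geq(\log n)/2$, so $g\to\infty$.
\item[(iii)] $k>n/2$. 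Then $\log(\mathrm{e}n/k)\leq 1+\log 2$, and $m\geq 8$ gives $m-\tfrac54\log(4\mathrm{e}m/5)\geq 8-\tfrac54\log(6.4\mathrm{e})>3$, since this function is increasing in $m$ for $m\geq 8$. Thus $g$ is bounded below by a positive constant and $k\to\infty$ suffices.
\end{itemize}

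The delicate points are the exact constants in (iii) and the use of $w\to\infty$ together with $k\geq 64d^6$ in (i), which is precisely where the near-threshold assumption enters. Summing over $k$ and applying the union bound then gives $\Pr[\mathcal{E}_{\mathrm{II}}\land\mathcal{B}]=o(1)$.
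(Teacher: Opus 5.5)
Your argument is correct, and it takes a leaner route than the paper's. Both proofs start from \Cref{lem:critical-boundary}. The paper then uses $\delta(G)\geq 2$ to deduce that $[n]\backslash S_r$ spans at least $3\ell/8$ edges. It bounds the probability of two independent events jointly: a dense interior ($X_1\geq 3\ell/8$) and a sparse boundary ($X_2\leq 5\ell/4$). It splits the range at $\ell=3/(4p)$, beyond which the upper tail of $X_1$ is useless, and at $\ell=n/2$.

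You drop the interior event entirely and show that the lower tail of the boundary count alone beats $\binom nk$ throughout. The only delicate range is your case (i): there $k\geq 64d^6$ gives $\log(\mathrm{e}n/k)\leq\log n-6\log d+O(1)$, and $w\to\infty$ closes the gap.

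Your bookkeeping is also more careful than the paper's. Its displayed sums \eqref{eq:case-2-total-1}--\eqref{eq:case-2-total-3} omit the factor $\binom{n}{\ell}$ counting the sets $U$; indeed its constant $1.454=\frac38\log\frac{4\mathrm{e}}{3}+\frac54\log\frac{4\mathrm{e}}{5}$ has no $\log(\mathrm{e}n/\ell)$ term. Restoring that factor costs at most $\log(4\mathrm{e}d/3)$ per vertex in the second range and $1+\log 2$ in the third, and the paper's exponents absorb both. In the first range, however, the paper's estimate $\star\leq 1.347+\frac54\log d-\frac d2$ becomes too lossy. One must use $d(1-\ell/n)\geq d-\frac34$ there instead of $d/2$, and once that is done the $X_1$ factor is no longer needed; this is exactly your case (i).

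So the interior-density event is superfluous in the paper's argument, and your single-tail version is shorter and complete as written.
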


\begin{lemma} \label{lem:critical-goal-3}
We have $\Pr[\mathcal{E}_{\mathrm{III}}\land \mathcal{B}]=o(1)$. 
\end{lemma}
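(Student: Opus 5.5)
Under the event $\mathcal{B}$ I expect a Type III retraction to be impossible outright, so $\Pr[\mathcal{E}_{\mathrm{III}}\land\mathcal{B}]=0$ for all sufficiently large $n$. The argument compares chromatic numbers: a retraction transfers colourings of the small image back to all of $G$, while item (d) of \Cref{lem:near-critical-properties} says $G$ has large chromatic number and item (e) says small vertex sets are sparse.

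Fix a graph $G$ satisfying $\mathcal{B}$, and suppose some $r$ of Type III is a retraction. Write $S=S_r$; Type III means $|S|<8/p$.

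First, bound $\chi(G[S])$ from above by degeneracy. Every subset $U\subseteq S$ has $|U|\leq 8/p$, so by \Cref{lem:near-critical-properties}(e) it satisfies $|E(G[U])|<r_0|U|/2$. Hence the average degree of $G[U]$ is less than $r_0$, and $G[U]$ has a vertex of degree at most $\lceil r_0\rceil-1$. So $G[S]$ is $(\lceil r_0\rceil-1)$-degenerate. The greedy colouring along a degeneracy ordering then gives
\[
\chi(G[S])\leq\lceil r_0\rceil\leq r_0+1=\frac{30\log d}{\log\log d}+1 .
\]

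Second, lift this colouring to $G$. The retraction $r$ is a homomorphism $G\to G[S]$, and a proper colouring $c$ of $G[S]$ is a homomorphism $G[S]\to K_q$. The composition $c\circ r$ is therefore a proper $q$-colouring of $G$, so $\chi(G)\leq\chi(G[S])$.

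Third, compare with \Cref{lem:near-critical-properties}(d), which gives $\chi(G)\geq d/(5\log d)$. Since $d\geq\log n\to\infty$, we have
\[
\frac{d}{5\log d}>\frac{30\log d}{\log\log d}+1
\]
for all large $n$. This contradicts the first two steps, so no Type III retraction exists under $\mathcal{B}$, and $\Pr[\mathcal{E}_{\mathrm{III}}\land\mathcal{B}]=0=o(1)$.

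The main point to get right is the first step: it uses the fact that (e) holds for every subset of $S$, not just for $S$ itself, which is what upgrades a density bound into a degeneracy bound. The only other check is the asymptotic comparison in the final display.
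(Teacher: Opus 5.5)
Your proof is correct and is essentially the paper's argument: applying item (e) to every subset of $S_r$ makes $G[S_r]$ degenerate with degree bound below $r_0$, hence $\chi(G[S_r])\leq r_0+1$, while composing a colouring with the retraction gives $\chi(G)\leq\chi(G[S_r])$, which contradicts the lower bound $\chi(G)\geq d/(5\log d)$ from item (d) for large $n$. Your bookkeeping ($\lceil r_0\rceil-1$ in place of the paper's $\lfloor r_0\rfloor$) is equivalent, and your conclusion that the probability is exactly $0$ for large $n$ matches what the paper's contradiction argument gives.
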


The first half of \Cref{thm:main} follows after the above three lemmata. 
The second half of \Cref{thm:main} is implied by \Cref{thm:min-deg-phase-transition}.

\subsection{Case I}
This subsection proves \Cref{lem:critical-goal-1} where the image of the candidate retraction is very large.

We begin with changing the order of summation in \eqref{eq:goal-part-1}. 
For any $S\subsetneq[n]$, if If $\mathcal{B}$ happens, then $\Delta(G[S])\leq 4d$ for any induced subgraph $G[S]$ of $G$. 
Let $\mathcal{H}_S$ be the collection of all
graphs $H$ on the vertex set $S$ such that $\Delta(H)\leq 4d$.
Analogously to \eqref{eq:total-probability-ra-2}, we have
\begin{equation}\label{eq:case-1-s1}
\begin{aligned}
\sum_{\substack{r\colon\text{\textrm{\textnormal{Case I}}}}}
\Pr[\mathcal{R}_r\land\mathcal{B}]
&\leq
\sum_{1\leq\ell<64d^6}
\ \sum_{\substack{S\subseteq[n]\\ |S|=n-\ell}}
\ \sum_{H\in\mathcal{H}_S}
\Pr[G[S]=H]\cdot Z(H)\\
&\leq
\sum_{1\leq\ell<64d^6}
\ \sum_{\substack{S\subseteq[n]\\ |S|=n-\ell}}
\max_{H\in\mathcal{H}_S}Z(H)
\end{aligned}
\end{equation}
with the definition
\begin{equation}\label{eq:zh-def}
Z(H)\defeq
\sum_{\substack{r\colon\\ S_r=S}}
\Pr[\mathcal{R}_r\land\mathcal{B}\mid G[S]=H]
\end{equation}
for $H\in\mathcal{H}_S$.

Fix $S$ and $H\in\mathcal{H}_S$. From this point onward, $H$ is fixed rather than random. 
Conditional on $G[S]=H$, all edges that are not fully in $S$ remain mutually independent. 

Suppose that $\mathcal{R}_r\land\mathcal{B}$ occurs for some candidate map $r$ with $S_r=S$. Since $\delta(G)\geq2$ under
$\mathcal{B}$, choose two distinct edges incident with each vertex $x\in[n]\backslash S$, and let $F$ be the simple graph formed by the union of all these chosen edges. 
For definiteness, one may choose the two edges whose other endpoints have the smallest labels. 
Formally, the edge set of $F$ is
\[
\bigcup_{x\in[n]\backslash S}\{xy_1,xy_2\colon\text{$y_1\neq y_2$ are the smallest indices that $xy_1,xy_2\in E(G)$}\}.
\]
Thus $F$ is determined by $G$, although in the estimate below we simply take a union bound over all possible $F$.
The graph $F$ is served as a \emph{witness}. 

As in the notation above, partition
\[
F_{\mathrm{out}}\defeq F[[n]\backslash S],
\qquad
F_{\mathrm{boundary}}\defeq F-F_{\mathrm{out}}.
\]
Set
\[
e_2\defeq |E(F_{\mathrm{out}})|,
\qquad
e_1\defeq |E(F_{\mathrm{boundary}})|. 
\]
So $F_{\mathrm{out}}$ is an \emph{induced} subgraph of $F$ that collects edges in $[n]\backslash S$, 
and $F_{\mathrm{boundary}}$ is a bipartite subgraph collecting edges between $S$ and $[n]\backslash S$. 
Let $\kappa$ be the number of connected components of
$F_{\mathrm{out}}$, including isolated vertices.

\begin{lemma}\label{lem:witness-count}
Any witness $F$ constructed above satisfies
\[
e_1+e_2\leq2(n-|S|)
\qquad\text{and}\qquad
e_1+e_2-\kappa\geq\frac{2(n-|S|)}{3}.
\]
\end{lemma}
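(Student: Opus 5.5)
The plan is a degree count for the upper bound and a component-by-component count for the lower bound. Write $m\defeq n-|S|$ throughout.

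\emph{Upper bound.} By construction, every edge of $F$ is one of the two edges chosen for some $x\in[n]\backslash S$. Hence $|E(F)|\leq 2m$. Every edge of $F$ lies either in $F_{\mathrm{out}}$ or in $F_{\mathrm{boundary}}$, and no edge of $F$ lies inside $S$, so $e_1+e_2=|E(F)|\leq 2m$.

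\emph{Lower bound.} First I use a degree count on the outside vertices. Each $x\in[n]\backslash S$ receives two distinct chosen edges, so $\deg_F(x)\geq 2$. Each edge of $F_{\mathrm{out}}$ contributes $2$ to $\sum_{x\notin S}\deg_F(x)$, and each edge of $F_{\mathrm{boundary}}$ contributes exactly $1$. I would apply this locally rather than globally.

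Let $C$ be a connected component of $F_{\mathrm{out}}$, with $v_C$ vertices and $e_C$ edges. Let $b_C$ be the number of edges of $F_{\mathrm{boundary}}$ incident to $C$. Summing degrees over $C$ gives $2e_C+b_C\geq 2v_C$, that is, $b_C\geq 2(v_C-e_C)$. Every boundary edge has exactly one outside endpoint, so $e_1=\sum_C b_C$. Also $e_2=\sum_C e_C$ and $\kappa=\sum_C 1$. It therefore suffices to show, for each component, that
\[
e_C+b_C-1\geq \tfrac{2}{3}v_C ,
\]
and then sum over components, using $\sum_C v_C=m$.

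Since $C$ is connected, $e_C\geq v_C-1$. I split into two cases.
\begin{itemize}
\item If $C$ is a tree ($e_C=v_C-1$), then $b_C\geq 2$. The contribution is at least $v_C-2+2=v_C\geq 2v_C/3$. This also covers isolated vertices ($v_C=1$), which receive both edges from the boundary.
\item If $e_C\geq v_C$, then $C$ contains a cycle. As $F$ is simple, this forces $v_C\geq 3$. The contribution is at least $e_C-1\geq v_C-1\geq 2v_C/3$.
\end{itemize}

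The only delicate point is making sure the per-component bound holds uniformly. The worst case is a unicyclic component on three vertices (a triangle), which gives exactly $2v_C/3$; this is where the constant $2/3$ comes from. Beyond that, I only need to check that boundary edges are counted once each and attributed to a single component, which holds because each has exactly one endpoint outside $S$.
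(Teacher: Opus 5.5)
Your proposal is correct and is essentially the paper's proof. You bound the upper side by counting at most two chosen edges per vertex of $[n]\backslash S$. For the lower side you use the same per-component inequality $2e_C+b_C\geq 2v_C$, split into the tree case ($b_C\geq 2$) and the non-tree case ($v_C\geq 3$ by simplicity), and then sum over the $\kappa$ components.
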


\begin{proof}
The first inequality follows because two edges were selected at each vertex
of $[n]\backslash S$, and repeated selections produce only one edge in
the simple graph $F$.

Consider a component $J$ of $F_{\mathrm{out}}$. Suppose that $J$
has $a$ vertices and $b$ edges, and that $c$ witness edges join
$V(J)$ to $S$. Every vertex of $J$ is incident with at least two
edges of $F$, so
\[
2b+c\geq2a.
\]
If $J$ is a tree, then $b=a-1$, and hence $c\geq2$. It follows that $b+c-1\geq a$. 
If $J$ is not a tree, then $b\geq a$; moreover $a\geq3$, because the graph is simple. 
Therefore $b+c-1\geq a-1\geq2a/3$. 
Summing these inequalities over all $\kappa$ components gives the desired bound. 
\end{proof}

If the subgraph $G[S]$ has been revealed to be $H$, 
then a retraction $r$ restricted on $[n]\backslash S$ is a homomorphism from $F_{\mathrm{out}}$ to $H$,   
since $r(x)r(y)\in E(H)$ for any $xy\in E(F_{\mathrm{out}})$. 
This significantly reduces the number of candidate maps if the $F_{\mathrm{out}}$ part of the witness is fixed. 

To rewrite the summation for $Z(H)$ in \eqref{eq:zh-def}, we enumerate the possible witnesses and candidate maps. 
It is important that $H$ has already been fixed before this enumeration.

First choose $F_{\mathrm{out}}$. If it has $e_2$ edges, the number of
choices is at most
\[
\binom{\binom{n-|S|}{2}}{e_2}\leq(n-|S|)^{2e_2}.
\]

Next, for a fixed $F_{\mathrm{out}}$, we claim that the number of maps from $[n]\backslash S$ to $S$ that are
homomorphisms from $F_{\mathrm{out}}$ to $H$ is at most
\[
|S|^{\kappa}(\Delta(H))^{n-|S|-\kappa}\leq |S|^{\kappa}(4d)^{n-|S|-\kappa}. 
\]
To see this, fix one rooted spanning tree in each component of $F_{\mathrm{out}}$. 
Choosing the image $r(x)\in S$ of one root $x\in F_{\mathrm{out}}$ in each component gives $|S|^{\kappa}$ so many options. 
Each of the remaining vertex in $F_{\mathrm{out}}$ must map to a neighbour of its parent's image. 
This gives at most $\Delta(H)$ options, and $\Delta(H)\leq 4d$ because $\mathcal{B}$ happens. 

Now fix one of these compatible maps $r$. We next choose $F_{\mathrm{boundary}}$. 
A pair $xv$ where $x\in[n]\backslash S$ and $y\in S$ can belong to $F_{\mathrm{boundary}}$ only if $r(x)v\in E(H)$. 
Otherwise the present edge $xv$ would be mapped to a non-edge. 
The number of such allowed pairs is at most
\[
\sum_{x\in[n]\backslash S}\deg_H(r(x))
\leq4d(n-|S|).
\]
Thus, if $F_{\mathrm{boundary}}$ has $e_1$ edges, there are at most
\[
(4d(n-|S|))^{e_1}
\]
choices for it.

It remains to charge the probabilities of the required present and absent edges. 
By \eqref{eq:b1}, the map $r$ forces at least
\[
B_1(H,r)\geq(n-|S|)(|S|-4d)
\]
edges between $S$ and $[n]\backslash S$ to be absent. 
These forced-absent edges are disjoint from the witness edges in $F$: 
indeed, the edges of $F_{\mathrm{out}}$ have both endpoints outside $S$, 
and every edge $xv$ of $F_{\mathrm{boundary}}$ was chosen only when $r(x)v\in E(H)$, meaning the edge is not forced to be absent. 
Therefore, conditional on $G[S]=H$, the independence of the remaining edges gives
\[
p^{e_1+e_2}(1-p)^{(n-|S|)(|S|-4d)}
\]
as an upper bound on the probability that all witness edges are present and all these $B_1(H,r)$ edges are absent. 
We discard any additional conditions required for $r$ to be a retraction, which only enlarges the probability.

Combining the preceding enumeration steps and applying a union bound over
all witnesses gives
\begin{equation}\label{eq:small-witness-sum}
\begin{aligned}
Z(H)
&\leq (1-p)^{(n-|S|)(|S|-4d)}
\sum_{\substack{e_1,e_2\geq0,\ 1\leq\kappa\leq n-|S|\\
e_1+e_2\leq2(n-|S|)\\
e_1+e_2-\kappa\geq2(n-|S|)/3}}
(n-|S|)^{2e_2+e_1}|S|^\kappa
(4d)^{n-|S|-\kappa+e_1}p^{e_1+e_2}.
\end{aligned}
\end{equation}
Here, the summation over $e_1,e_2,\kappa$ is truncated by \Cref{lem:witness-count}. 

We finish by summing \eqref{eq:small-witness-sum}. In
\eqref{eq:case-1-s1}, $|S|=n-\ell$ with
$1\leq\ell<64d^6$. Since $p=d/n$ and $d\leq\log^2n$,
\[
p(\ell+4d)=O(d^7/n)=o(1),
\]
and hence
\begin{equation}\label{eq:small-cross-penalty}
(1-p)^{\ell(n-\ell-4d)}
\leq\exp\{-p\ell(n-\ell-4d)\}
=\exp\{-d\ell+o(\ell)\}.
\end{equation}
For every term in \eqref{eq:small-witness-sum},
\[
|S|^\kappa p^{e_1+e_2}
\leq n^\kappa\left(\frac dn\right)^{e_1+e_2}
\leq n^{-2\ell/3}d^{e_1+e_2},
\]
where the last inequality uses $e_1+e_2-\kappa\geq2\ell/3$ from \Cref{lem:witness-count}. 
Furthermore,
\[
(4d)^{\ell-\kappa+e_1}\leq(4d)^{3\ell},
\qquad
d^{e_1+e_2}\leq d^{2\ell},
\qquad
\ell^{2e_2+e_1-\ell}\leq\ell^{3\ell}.
\]
There are at most $O(\ell^3)$ triples $(e_1,e_2,\kappa)$ in the summation. 
Using $\binom n\ell\leq(\mathrm en/\ell)^\ell$, $d=\log n+\log\log n+w$, and \eqref{eq:small-cross-penalty}, 
the total contribution in \eqref{eq:case-1-s1} from all $S$ with $n-|S|=\ell$ is at most
\[
O(\ell^3)
\left(C n^{-2/3}\ell^3d^5\mathrm{e}^{-w+o(1)}\right)^\ell
\leq
O(\ell^3)\left(C n^{-2/3}d^{23}\right)^\ell
\]
for an absolute constant $C$. 
Since $d\leq\log^2n$, this is $o(n^{-\ell/2})$ uniformly for $1\leq\ell<64d^6$. 
Summing over $\ell$ proves \eqref{eq:goal-part-1}.

\subsection{Case II}
This subsection proves \Cref{lem:critical-goal-2}. 
Recall the condition on the image size of a candidate retraction $64d^6\leq|[n]\backslash S_r|\leq n-8/p$. 
We assume $n$ large enough, so $3/(4p)\leq n/2\leq n-8/p$. 
This is possible in the regime of $p$ in \eqref{eq:tight-condition}. 

Suppose that $\mathcal{E}_{\mathrm{II}}\land\mathcal{B}$ occurs; that is, $\mathcal{B}$ happens and there is a proper retraction $r$ in Case II. 
By \Cref{lem:critical-boundary}, the number of edges between $S_r$ and $[n]\backslash S_r$ is at most $\ell+16d^6\leq 5\ell/4$ for large enough $n$.
Here, $\ell=n-|S_r|$.  
Since $\delta(G)\geq 2$, the sum of the degrees of the vertices in
$[n]\backslash S_r$ is at least $2\ell$, 
and so the number of edges of $G$ inside $[n]\backslash S_r$ is at least $3\ell/8$. 
Therefore, the occurrence of a Case-II retraction implies the existence of a set $U\subseteq[n]$ of size $64d^6\leq|U|\leq n/2$ such that
\begin{equation}\label{eq:case-2-set-properties}
X_1\defeq|E(G[U])|\geq\frac{3|U|}{8}
\qquad\text{and}\qquad
X_2\defeq|E_G(U,[n]\backslash U)|\leq\frac{5|U|}{4},
\end{equation}
where $E_G(U,[n]\backslash U)$ denotes the set of edges having one
endpoint in $U$ and the other endpoint in $[n]\backslash U$.
We show that this is unlikely to happen, and hence prove \Cref{lem:critical-goal-2}. 

First fix a set $U$ of size $\ell$. 
Note that the random edges in $E(G[U])$ and $E_G(U,[n]\backslash U)$ are mutually independent, 
and so the two counts $X_1\sim\mathrm{Bin}(\ell(\ell-1)/2,p)$ and $X_2\sim\mathrm{Bin}(\ell(n-\ell),p)$ are independent binomial distributions. 
To bound the probabilities of the two events in \eqref{eq:case-2-set-properties}, we apply the Chernoff bound \eqref{eq:chernoff-bound}. 

For $X_1$, because $\mathbb{E}[X_1]<p\ell^2/2$, we have
\[
\Pr\left[X_1\geq\frac{3\ell}{8}\right]\leq\left(\frac{\mathrm{e}p\ell^2/2}{3\ell/8}\right)^{3\ell/8}=\left(\frac{4\mathrm{e}p\ell}{3}\right)^{3\ell/8}
\]
by the upper tail bound when $\ell\leq 3/(4p)$. 
If this condition does not hold, then the right hand side is greater than $1$, so the probability is trivially bounded by $1$. 
For $X_2$, because $\mathbb{E}[X_2]=d\ell(1-\ell/n)\geq d\ell/2>5\ell/4$ for all sufficiently large $n$, we can always apply the lower tail bound and have
\begin{equation} \label{eq:case-2-x2}
\Pr\left[X_2\leq\frac{5\ell}{4}\right]\leq\exp\left\{-d\ell\left(1-\frac{\ell}{n}\right)\right\}\left(\frac{4\mathrm{e}d(1-\ell/n)}{5}\right)^{5\ell/4}. 
\end{equation}
As $X_1$ and $X_2$ are independent, the probability that the event in \eqref{eq:case-2-set-properties} happens for a fixed $U$ is at most the product of the above two bounds. 

By union bound, the probability of \eqref{eq:case-2-set-properties} happening for any $U$ is at most
\begin{align}
&\sum_{64d^6\leq \ell\leq 3/(4p)}\left(\frac{4\mathrm{e}p\ell}{3}\right)^{3\ell/8}\exp\left\{-d\ell\left(1-\frac{\ell}{n}\right)\right\}\left(\frac{4\mathrm{e}d(1-\ell/n)}{5}\right)^{5\ell/4}\label{eq:case-2-total-1}\\
+&\sum_{3/(4p)\leq \ell\leq n/2}\exp\left\{-d\ell\left(1-\frac{\ell}{n}\right)\right\}\left(\frac{4\mathrm{e}d(1-\ell/n)}{5}\right)^{5\ell/4}\label{eq:case-2-total-2}\\
+&\sum_{n/2\leq \ell\leq n-8/p}\exp\left\{-d\ell\left(1-\frac{\ell}{n}\right)\right\}\left(\frac{4\mathrm{e}d(1-\ell/n)}{5}\right)^{5\ell/4}.\label{eq:case-2-total-3}
\end{align}
The above summation is well-defined for large enough $n$; that is, $3/(4p)\leq n/2\leq n-8/p$ for $n$ large enough. 

To show \eqref{eq:case-2-total-1} is $o(1)$, perform a straightforward calculation to see that the summand is $(\mathrm{e}^{\star})^\ell$ where
\[
\star\leq 1.454+\frac{3}{8}\log p\ell +\frac{5}{4}\log d\left(1-\frac{\ell}{n}\right)-d\left(1-\frac{\ell}{n}\right)
\leq
1.347+\frac{5}{4}\log d-\frac{d}{2}.
\]
Here we use the condition $\ell\leq 3/(4p)\leq n/2$. 
This gives that \eqref{eq:case-2-total-1} is upper bounded by $e^{-d/3}=o(1)$ for sufficiently large $n$. 
The second term \eqref{eq:case-2-total-2} can be bounded by following almost the same lines. 

For the third term, note that the function $-x+\frac{5}{4}\log x$ is decreasing for $x\geq 5/4$, and the condition on $\ell$ gives $d(1-\ell/n)\geq 8>5/4$. 
Therefore, the summand in \eqref{eq:case-2-total-3} is $(\mathrm{e}^{\star})^\ell$ with
\[
\star=\frac{5}{4}\log\frac{4\mathrm{e}}{5}+\frac{5}{4}\log d\left(1-\frac{\ell}{n}\right)-d\left(1-\frac{\ell}{n}\right)\leq\frac{5}{4}\log\frac{4\mathrm{e}}{5}+\frac{5}{4}\log 8-8\leq -4.42, 
\]
and the summation in \eqref{eq:case-2-total-3} is therefore $o(1)$ as $n$ grows.

\subsection{Case III}
This subsection proves \Cref{lem:critical-goal-3}. 

A graph has \emph{degeneracy} $k$ if every non-empty subgraph has at least one vertex of degree at most $k$. 
As a standard fact, a $k$-degenerate graph has chromatic number at most $k+1$ by an induction on the number of vertices: 
by removing a vertex of degree at most $k$ and colouring the rest of the graph recursively, the removed vertex will always have at most $k$ neighbors already coloured, leaving at least one colour free out of $k+1$ total colours. 

Suppose one such retraction $r$ exists under $\mathcal{B}$. 
Recall the condition on the size of retraction $1\leq |S_r|<8/p$, i.e., it is extremely small. 
By \Cref{lem:near-critical-properties}(d), the chromatic number of the random graph $G$ satisfies is at most $d/(5\log d)$ for all sufficiently large $n$ under $\mathcal{B}$. 

On the other hand, \Cref{lem:near-critical-properties}(e) applies to every subset $U\subseteq S_r$. 
It shows that every non-empty induced subgraph of $G[S_r]$ has average degree strictly less than $r_0$, 
and therefore has a vertex of degree at most $\lfloor r_0\rfloor$. 
Consequently, the degeneracy of $G$ is at most $\lfloor r_0\rfloor$, meaning the graph $G[S_r]$ is $(\lfloor r_0\rfloor+1)$-colourable. 
Since
\[
\lfloor r_0\rfloor+1=\left\lfloor\frac{30\log d}{\log\log d}\right\rfloor+1<\frac{d}{5\log d}
\]
for all sufficiently large $n$, we obtain
\[
\chi(G[S_r])<\chi(G).
\]
This contradicts the existence of the retraction: 
the retraction $G\to G[S_r]$ implies $\chi(G)\leq\chi(G[S_r])$, 
while the fact that $G[S_r]$ is an induced subgraph trivially implies the reverse inequality. 
Hence a retraction would force $\chi(G)=\chi(G[S_r])$. 
This proves \Cref{lem:critical-goal-3}. 

\bibliographystyle{alphaurl}
\bibliography{refs}

\end{document}